\let\ifposter\iffalse
\newcommand\p[1]{{\left(#1\right)}}
\newcommand\abs[1]{{\left|#1\right|}}
\newcommand\braces[1]{{\left\{#1\right\}}}
\newcommand\brackets[1]{{\left[#1\right]}}
\newcommand\cond{\,\middle|\,}
\newcommand\obs{y}
\newcommand\nobs{N}
\newcommand\obsspace{\mathcal{Y}}
\newcommand\lat{Z}
\newcommand\latspace{\mathcal{Z}}
\newcommand\para{\theta}
\newcommand\paraspace{\Theta}
\newcommand\marg{g}
\newcommand\nlf{h}
\newcommand\comp{f}
\newcommand\post{p}
\newcommand\kernel{\Pi}
\newcommand\prop{q}
\newcommand\iter{k}
\newcommand\pas{\gamma}
\newcommand\mle{\hat{\theta}}
\newcommand\estFIM[1]{\widehat I\p{#1}}
\newcommand\estFIMwhole[1]{\widehat I_{\text{whole}}\p{#1}}
\newcommand\expectation{\operatorname{E}}
\newcommand\ouv[2]{\p{#1,#2}}
\newcommand\norm[1]{{\left\|#1\right\|}}
\theoremstyle{plain}
\newtheorem{theorem}{Theorem}[section]
\newtheorem{lemma}[theorem]{Lemma}
\theoremstyle{definition}
\newtheorem{assumption}[theorem]{Assumption}
\theoremstyle{remark}
\newtheorem{remark}[theorem]{Remark}
\title{Efficient preconditioned stochastic gradient descent for estimation in  latent variable models}
\author[1]{Charlotte Baey}
\author[2]{Maud Delattre}
\author[1]{Estelle Kuhn}
\author[3]{Jean-Benoist Leger}
\author[4]{Sarah Lemler}
\affil[1]{Univ. Lille, CNRS, UMR 8524 - Laboratoire Paul Painlevé, F-59000 Lille, France}
\affil[2]{Université Paris-Saclay, INRAE, MaIAGE, 78350, Jouy-en-Josas, France.}
\affil[3]{Université de technologie de Compiègne, CNRS, Heudiasyc, Compiègne, France}
\affil[4]{Université Paris-Saclay, CentraleSupélec, Mathématiques et Informatique pour la Complexité et les Systèmes, 91190, Gif-sur-Yvette, France.}
\date{June 2023}
\begin{document}
\maketitle

\begin{abstract}
	Latent variable models are powerful tools for modeling complex phenomena involving in particular partially observed data, unobserved variables or underlying complex unknown structures. Inference is often difficult due to the latent structure of the model. To deal with parameter estimation in the presence of latent variables, well-known efficient methods exist, such as gradient-based and EM-type algorithms, but with practical and theoretical limitations.  In this paper, we propose  as an alternative for parameter estimation an efficient preconditioned stochastic gradient algorithm. Our method includes  a preconditioning  step based on a positive definite  Fisher information matrix estimate. We prove convergence results for  the proposed algorithm under mild assumptions for very general latent variables models. We illustrate through relevant simulations  the performance of the proposed methodology in a nonlinear mixed effects model and in a stochastic block model.
\end{abstract}

\section{Introduction}

Latent variable models are widely used in many fields to describe complex phenomena whose mechanisms are indirectly observed and whose consideration in the model requires the use of unobserved variables. One can mention, for instance, mixture models \cite{mclachlan1988mixture} or stochastic block models \cite{abbe2018,lee2019} that are respectively used to describe the existence of an unknown group structure in a population and in an interaction network, or mixed-effects models whose latent structure is intended to describe some inter-individual variability \cite{pinheiro2000,lavielle2014}. While bringing a certain level of detail to the modeling, the use of latent variables leads to more complex inference in general, as the observed likelihood often does not have an explicit form. This has led to the development of specific numerical methods for parameter estimation in latent variable models. Among the most common approaches, one can find the EM algorithm \cite{dempster1977} and its variants such as the Stochastic Approximation EM (SAEM) algorithm \cite{delyon1999} or even the Variational EM (VEM) algorithm \cite{bernardo2003variational}. There are also gradient-based methods (see \textit{e.g} chapters 10-11 in \citet{cappe2005}), in particular some stochastic versions of the gradient descent algorithm have been specifically developed for latent variable models \cite{gu1998stochastic, cai2010metropolis, fang2021estimation}. 

EM-type algorithms are popular for their ease of implementation in curved exponential models where only operations on the sufficient statistics of the model are required at each iteration. These algorithms can still be applied in more general latent variable models but the methodology is not generic and requires new developments for each new model considered. To face this restrictive assumption, \citet{debavelaere2021} suggested to use the exponentialization trick which consists in performing inference in an extended model belonging to the curved exponential family instead of in the initial model. However  this approach has limitations in practice, mainly due to difficult algorithmic settings and tuning. Besides, the theoretical properties of EM-type algorithms have been the subject of many contributions. To our knowledge, the existing convergence theorems however all assume that the model belongs to the curved exponential family and none of them brings any guarantee beyond this framework (see \citet{wu1983} for EM and \citet{delyon1999} for SAEM). 

Gradient-based methods are an attractive alternative when EM-type algorithms are not easy to implement since their main ingredient is the log-likelihood gradient which is easily available in any latent variable model or at least approximated by combining the Fisher's identity (see \citet{cappe2005}) with Monte-Carlo methods. Among these, we find in particular the algorithms of \citet{cai2010metropolis} and \citet{fang2021estimation}. The former has no theoretical guarantee and the latter, although supported by a convergence theorem, applies only to models that do not contain parameters to be estimated in the distribution of the latent variables. In recent stochastic gradient algorithms, some attention is also given to variance control for the full gradient estimation when it is obtained by processing mini-batches of the original sample (see \textit{e.g.} \cite{johnson2013accelerating, reddi2016stochastic, fang2018spider}). For all the above mentioned algorithms, although often supported by theoretical convergence guarantees, the practical performance is strongly affected by the choice of the learning rate which is a difficult setting. To overcome this difficulty and speed up the convergence, some people propose to precondition the gradient, but as suggested in \citet{li2017preconditioned}, one also has to be careful with the definition of the preconditioner so as not to degrade the behavior of the algorithm.

In this paper, our contribution consists in presenting a new stochastic gradient algorithm for maximum likelihood parameter estimation in general latent variable models.  The proposed algorithm distinguishes itself from other stochastic gradient-based algorithms by including an easily available and structurally positive definite preconditioner based on \citet{delattre2019}, which is also an estimate of the Fisher information matrix (FIM) in independent data models. As the FIM corresponds to the Hessian matrix of the objective function, it is therefore a natural choice regarding the second-order approximation (see Li, 2017). In addition, as the FIM estimate proposed by \citet{delattre2019} has the nice structural property of being symmetric positive definite, our  preconditioning step allows to scale the different directions of the parameter space, homogenizing the evolution of the algorithm, and ensures that the search direction corresponds to a descent direction.

Since the algorithm entails updating the estimate of the Fisher information matrix at each iteration, asymptotic confidence intervals can also be easily computed for all parameters as a by-product of the algorithm.
Theoretical convergence results are provided that, unlike the existing results for competing EM-type algorithms, are also valid beyond the curved exponential family. The algorithm implementation is straightforward in practice, as it only requires the computation of the gradients of the log-densities of the latent variables and the gradients of the conditional log-densities of the observations given the latent, both of which being readily available quantities for the classical stochastic gradient descent. In addition, we propose a generic warming procedure which simplifies tuning and improves the algorithm efficiency in practice. Finally, it can also be easily extended  to Bayesian maximum a posteriori estimation as well as to regularized estimation.

The paper is organized as follows. Section~\ref{sec:mle_latentvarmodel} introduces latent variable models. Section~\ref{sec:algo_fsgd} presents our new Fisher-preconditioned stochastic gradient algorithm called Fisher-SGD and provides theoretical analysis. Some details on algorithmic settings are also given. Numerical results are presented in Section~\ref{sec:numerical} that show the good performances of the algorithm. Some concluding remarks are given in Section~\ref{sec:conclusion}.

\section{Maximum Likelihood Estimation in latent variable models}\label{sec:mle_latentvarmodel}

\subsection{Description of latent variable models}

Let us consider observed random variables denoted by $Y$ taking value in $\obsspace$  and latent random variables denoted by $\lat$ taking value in $\latspace$ which are not observed.
We assume that the couple $(Y,\lat)$ admits  a parametric density $\comp$ parameterized by $\para$ taking value in $\paraspace\subset \mathbb{R}^d$, where $d$ is a non-zero positive integer. We denote by $y$ and $z$ realizations of the random variables $Y$ and $Z$ respectively. We denote by $\post_\para \p{\cdot \cond y}$ the density of the posterior distribution, i.e. the conditional distribution of $Z$ given $y$.

Popular examples are mixture models, mixed effects models \cite{davidian1995}, hidden Markov models \cite{cappe2005}, stochastic block models \citet{nowicki2001estimation}, or frailty models \cite{duchateau2008}.

Estimation of  model parameters is not trivial in these models due to the presence of the latent structure  and the unobserved variables $\lat$. Namely one has to estimate $\para$ only using the observed values of variables $Y$ denoted by $\obs$.

\subsection{Examples}\label{sec:examples}
In this section, we provide several examples of latent variable models, that will be used in the numerical experiments.

\subsubsection{Mixed-effect models}\label{subsec:mixedeffects}
Mixed-effects models are commonly used when repeated data are available for each observational unit, e.g. in longitudinal studies or population models. They allow to account for both intra- and inter-individual variabilities through the use of fixed and random effects, the former being common to all the individuals while the latter vary from one individual to the other.
These models can be described hierarchically, with a first layer giving the marginal distribution of the latent variables $\lat$, and a second layer specifying the conditional distribution of the observations $Y$ given the latent variables $\lat$. More specifically, denoting by $Y_{ij}$ the $j$-th observation of individual $i$, with $j=1,\dots,J$ and $i=1,\dots,\nobs$, we consider the following model:
\begin{equation*}
	\left\{
	\begin{array}{ll}
		\lat_i \sim \mathcal{N}(\beta,\Gamma) & \\
		Y_{ij} = \nlf(\alpha,\lat_i,X_{ij}) + \varepsilon_{ij}, & \varepsilon_{ij} \sim \mathcal{N}(0,\sigma^2)
	\end{array}
	\right.
\end{equation*}
where $\alpha$ is a vector of fixed effects, $X_{ij}$ is a set of known covariates, $\nlf$ is a nonlinear function representing the intra-individual variability and $\varepsilon_{ij}$ is a random error term. The random effects are the latent variables $\lat_i$, and are assumed to be mutually independent. The sequences $(\lat_i)$ and $(\varepsilon_i)$, with $\varepsilon_i = (\varepsilon_{i1},\dots,\varepsilon_{iJ})^T$ are also assumed mutually independent.
The parameters to be estimated are $\alpha,\beta, \Gamma$ and $\sigma^2$.

We consider now the specific case of the logistic growth curve model, which is commonly used in the nonlinear mixed-effect models community (see e.g. \citet{Pin00} and their famous example of orange trees growth), where function $\nlf$ is given by:
\begin{equation}\label{eq:orangetrees}
	\nlf(\alpha,\lat_i,X_{ij}) = \frac{\lat_{i1}}{1 + \exp\p{-\frac{X_{ij} - \lat_{i2}}{\alpha}}},
\end{equation}
where $\lat_i = \p{\lat_{i1},\lat_{i2}}^T \sim \mathcal{N}(\beta, \Gamma)$, with $\beta = \p{\beta_1,\beta_2}^T$ and $\Gamma$ a $2\times2$ symmetric positive definite matrix. The model parameters are $\beta \in \mathbb{R}_+^* \times \mathbb R$, $\alpha \in \mathbb R^*_+$, $\Gamma \in \mathbb S_2^{++}$, where $\mathbb S^{++}_p$ is the set of symmetric, positive definite matrices of size $p \times p$ and $\sigma \in \mathbb R ^+$.

Due to the presence of the fixed effect $\alpha$, the joint density of $(Y_i,\lat_i)$ does not belong to the curved exponential family as defined in \citet{delyon1999}. Note that  the implementation of stochastic versions of the EM algorithm are not trivial in such cases, involving complex  terms evaluated by induction.

\subsubsection{Stochastic block models}\label{subsec:SBM}
The Stochastic Block Model (SBM) is a common model in graph analysis, introduced by \citet{holland1983stochastic} and \citet{nowicki2001estimation}. For a directed graph of $\nobs$ nodes, the SBM assumes a latent unknown node classification (here $Z$), and assumes edge presences as independent and identically distributed conditionally on the cluster of nodes with a probability distribution which depends only on node clusters. We note $Y$ the adjacency matrix of the graph, $K$ the number of groups, $Z_i$ the latent class indicator (one hot encoding) of node $i$. The SBM is formulated as follow:
\begin{equation}
	\left\{
	\begin{array}{ll}
		Z_i\sim\mathcal M\p{1; \alpha} & \text{ with } \alpha\in{\mathbb R_+^*}^K,\; \sum_k \alpha_k=1 \\
		Y_{ij}|Z_{ik}Z_{jl}=1\sim\mathcal{B}\p{p_{kl}} & \text{ with } \forall k,l,\; p_{kl}\in\ouv01 \\
	\end{array}
	\right.
\end{equation}

In this formulation, matrix $Y$ is the observed variable and $Z$ the latent variable. The parameters are $\alpha\in{\mathbb R_+^*}^K$ s.t. $\sum_k \alpha_k = 1$ and $p\in\ouv01^{K\times K}$.

Please note the non independence of $\p{Z_i}_{i:1\le i\le \nobs}$ conditionally on $Y$, thus the log-likelihood will be not splittable in terms involving only one $Z_i$ each.

\subsection{Maximum likelihood estimation in latent variable models}

We consider  the marginal density of $Y$ denoted by $\marg$ and defined by
\begin{equation*}
	\marg\p{\obs;\para}=\int_\latspace \comp\p{\obs,z;\para} \; dz
\end{equation*}

The maximum likelihood estimate (MLE) for parameter $\para$, denoted by $\mle$, is defined as:
\begin{equation*}
	\mle=\arg\max_\para  \marg\p{\obs;\para}
\end{equation*}

This estimate is very popular in statistics since it has very nice asymptotic properties in a wide class of statistical models. Assuming mild conditions, as the number of observations goes to infinity, the MLE is consistent, asymptotically Gaussian and efficient \cite{vandervaart2000}.  Therefore one can build asymptotic confidence intervals for model parameters as soon as an estimate of the Fisher information matrix is available.

However, computing this estimate in latent variable models through the maximization of the marginal likelihood $\marg$ often requires numerical tools. Indeed the marginal density does not usually admit an explicit expression. Most popular tools are EM-like algorithms \cite{ng2012}, used to maximize the density $\marg$ with respect to $\para$.

Another way to compute the MLE $\mle$ in latent variable models, often omitted but well discussed in \citet{cappe2005}, consists in searching the zeros of the derivative of $\log \marg$ using gradient-based methods. Indeed, assuming regularity conditions on $\comp$,  we get the Fisher identity \cite{cappe2005} which states that for all $\para \in \paraspace$:
\begin{equation}
	\label{eq:fisheridentity}
	\nabla_\para \log \marg\p{\obs;\para} = \expectation\p{ \nabla_\para \log \comp\p{\obs,Z;\para} \cond \obs ;\para}
\end{equation}

Solving a function defined as an expectation can be done using stochastic gradient algorithms. Note however that the quantity $\para$ is involved twice in the Fisher identity, namely in the derivative of the log-likelihood and in the posterior distribution of $\lat$.

Therefore we will consider in the following section a stochastic gradient type algorithm to solve the Fisher identity \eqref{eq:fisheridentity}.

\section{Efficient stochastic gradient algorithm with preconditioning step}\label{sec:algo_fsgd}

In this section, we present our algorithm Fisher-SGD and some convergence results. We first present the algorithm in the case of independent observations and give in a second time a more general version for non-independent observations.

\subsection{Description of the algorithm in the independent case}

One of the main ideas of our algorithm is to pre-condition, in the stochastic gradient descent at iteration $\iter$, by the positive definite estimate of the Fisher information $\estFIM{\para}$ proposed by \citet{delattre2019} and detailed in Appendix \ref{sec:app:proofs}. The algorithm is described in Algorithm~\ref{alg:1} (more details can be found in Algorithm~\ref{alg:detailled} in the Appendix).

\begin{algorithm}
	\caption{Fisher-SGD in the independent case}
	\label{alg:1}
	\begin{algorithmic} 
		\STATE{\bfseries Input:} $z_0,\para_0$, $y_1,\dots,y_N$,$r$
		\FOR{$\iter=1, \dots, K$}  
		\FOR{ $i=1,\dots, \nobs$}
		\STATE{$z_i^{\iter} \sim \prop $ where $\prop$ is either the posterior $\post_{\para_{\iter-1}} \p{ \cdot \cond y_i }$ or a Markov kernel $\kernel_{\para_{\iter-1}} \p{ \cdot \cond \cdot, y_i }$ }
		\STATE{ Compute $\Delta_i^{\iter}=\p{1-\pas_\iter}\Delta_i^{\iter-1}+\pas_\iter\nabla_\para\log\comp\p{y_i,z_i^\iter;\para_{\iter-1}}$}
		\ENDFOR
		\STATE{Compute $v_\iter=\frac{1}{\nobs}\sum_{i=1}^\nobs\nabla_\para \log\comp\p{y_i,z_i^\iter;\para_{\iter-1}}$}
		\STATE{Compute  $I_\iter=\dfrac{1}{N}\sum_{i=1}^N\Delta_i^{\iter}\p{\Delta_i^{\iter}}^T$}
		\STATE{Set $\para_\iter=\para_{\iter-1}+\pas_\iter
			I_\iter^{-1}v_\iter$}
		\ENDFOR
		\STATE{\bfseries Output:} Estimated parameter $\para_\iter$, estimated Fisher information matrix of the whole sample $ N I_\iter$, $r$-quasi-sample of latent variables in the posterior distribution $\p{z^{K-r+1}\cdots z^K}$.
	\end{algorithmic}
\end{algorithm}

\begin{remark} 
	\begin{enumerate}
		\item The proposed algorithm computes the MLE in latent variable models without requiring that the joint density  belongs to the  curved exponential family.
		\item The proposed algorithm allows to compute asymptotic confidence intervals for free since the MLE and a Fisher matrix estimate are available as a by-product of the algorithm.
		\item The proposed algorithm can be run either using exact simulation of the latent variables according to their conditional distribution given the observations whenever possible, or using the transition kernel of an ergodic Markov Chain having the posterior distribution as invariant distribution. Note that one common practical choice for such MCMC sampling scheme is the Metropolis-Hastings or the Metropolis-within-Gibbs algorithm \cite{robert1999}.
		\item The proposed algorithm can be extended to compute maximum a posteriori (MAP) estimate in Bayesian settings or regularized estimates, by adding to the criterion to be maximized a term corresponding to the prior distribution or the regularization term, and then considering an additional proximal step as in \citet{atchade2017perturbed}.
		\item The stochasticity of the stochastic gradient in our setting is due to the  sampling of the latent random variable which allows to compute the target criterion defined as an expectation on the latent space.
		\item In settings where the number of independent observations is large, usual minibatch techniques can be easily included in the proposed algorithm to deal with the high dimension of the observations (\citet{schmidt2017minimizing}).
	\end{enumerate}
\end{remark}

\subsection{Theoretical results}

We now present two convergence results for the algorithm, depending on how the realizations of $\lat$ are generated at each iteration of the algorithm, either from the posterior distribution  or from the  transition kernel of a Markov Chain Monte Carlo algorithm. For the sake of simplicity in the  next section we drop the index $1 \leq i \leq N$ in the notations. We first need some general regularity assumptions on the statistical model and on the sequence of step sizes $(\pas_\iter)$.

\begin{assumption}
	\label{ass:model}
	\begin{enumerate}
		\item \label{ass:joint} The joint density $f$ is twice differentiable for $\para \in \paraspace$.
		\item \label{ass:obs} For all $\obs \in \obsspace $ the observed log-likelihood $\log g\p{\obs,\para}$ is continuously differentiable on $\paraspace$ and 
		\begin{equation*}
			\nabla_\para g(\obs,\para)=\int_\mathcal{Z}\nabla_\para f(\obs,z,\para)dz.
		\end{equation*}
		\item \label{ass:stepsize} The sequence of step-sizes $\p{\pas_\iter}_k$ satisfies for all $\iter\geq 0$, $0\leq\pas_\iter\leq 1$, $\sum_{\iter=1}^{+\infty}\pas_\iter=+\infty$ and $\sum_{\iter=1}^{+\infty}\pas_k^2<+\infty$.
	\end{enumerate}
\end{assumption}

All these assumptions are classical for maximum likelihood estimation in latent variable models \cite{delyon1999, kuhn2004}.

We omit in the sequel the dependency of several quantities in $\obs$ since it is considered as fixed. Let us define the objective function to minimize $F\p{\para}=-\log g\p{\obs,\para}$. Therefore solving the Fisher identity (\ref{eq:fisheridentity}) is equivalent to solving $\nabla_\para F\p{\para}=0$.

\subsubsection{First case: simulating from the posterior}

To obtain the convergence of the algorithm in this context, we need  additional assumptions. 

\begin{assumption}
	\label{ass:nonconvex}
	\begin{enumerate}
		\item \label{ass:L} The gradient of $F$ is L-Lipschitz on $\paraspace$. 
		\item \label{ass:C} There exists $C>0$ such that for all $\obs$ and 
		$\para$, $\expectation\brackets{\norm{\nabla_\para \log f\p{\obs,\lat;\para}}^2}\leq C$.
		\item \label{ass:eigen} There exist $\mu_m>0$ and $\mu_M>0$ such that for all $k$, 
		$\forall \mu\in\operatorname{Eig}(I_\iter),\, \mu_m<\mu<\mu_M$, where
		$\operatorname{Eig}(A)$ denotes the set of eigenvalues (the spectrum) of matrix $A$.
	\end{enumerate}
\end{assumption}

The first two assumptions are classical when proving the convergence of a stochastic gradient algorithm. The third one is specific to our pre-conditioning using a positive definite estimate of the Fisher information. Note that in all regular models where the FIM is positive definite, this last assumption is satisfied for $\nobs$ and $\iter$ large enough since the FIM estimate proposed by \citet{delattre2019} is convergent when the sample size $\nobs$ goes to infinity and the algorithm is convergent when the number of iterations $k$ goes to infinity.

\begin{theorem}
	\label{th:prop_post}
	Under Assumptions~\ref{ass:model} and~\ref{ass:nonconvex}, the iterates $(\para_\iter)_\iter$ defined in Algorithm~\ref{alg:1} with $\prop$ equals to the posterior distribution $\post_{\theta_{k-1}}$ at iteration $k$ satisfy the convergence guarantee
	\begin{align*}
		\expectation \brackets{\underset{0\leq l\leq k}{\min} \norm{\nabla_\para F\p{\para_l}}^2} \leq&  \dfrac{2\p{F\p{\para_0}-\min F}}{2\mu_{m}\sum_{l=0}^\iter\pas_l}\\
		&+\dfrac{\mu_{M}^2CL\sum_{l=0}^\iter\pas_l^2}{2\mu_{m}\sum_{l=0}^\iter\pas_l}.
	\end{align*}
\end{theorem}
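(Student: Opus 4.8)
The plan is to run the classical descent-lemma analysis for nonconvex stochastic gradient methods, adapted to the preconditioned update $\para_\iter=\para_{\iter-1}+\pas_\iter I_\iter^{-1}v_\iter$, and to extract the two spectral constants from Assumption~\ref{ass:eigen}. First I would fix the filtration $\mathcal F_{\iter-1}$ generated by all draws up to iteration $\iter-1$, so that $\para_{\iter-1}$ and the $\Delta_i^{\iter-1}$ are $\mathcal F_{\iter-1}$-measurable. Since each $z_i^\iter$ is drawn exactly from the posterior $\post_{\para_{\iter-1}}$, Fisher's identity~\eqref{eq:fisheridentity} together with Assumption~\ref{ass:obs} gives $\expectation\brackets{v_\iter\cond\mathcal F_{\iter-1}}=\nabla_\para\log\marg\p{\para_{\iter-1}}=-\nabla_\para F\p{\para_{\iter-1}}$, i.e. $v_\iter$ is a conditionally unbiased estimator of $-\nabla_\para F\p{\para_{\iter-1}}$, while Assumption~\ref{ass:C} yields $\expectation\brackets{\norm{v_\iter}^2\cond\mathcal F_{\iter-1}}\leq C$.

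\textbf{One-step descent inequality.} Using that $\nabla_\para F$ is $L$-Lipschitz (Assumption~\ref{ass:L}) I would write the standard quadratic upper bound
\[
F\p{\para_\iter}\leq F\p{\para_{\iter-1}}+\pas_\iter\,\nabla_\para F\p{\para_{\iter-1}}^{T}I_\iter^{-1}v_\iter+\frac{L\pas_\iter^2}{2}\norm{I_\iter^{-1}v_\iter}^2 .
\]
Assumption~\ref{ass:eigen} bounds the spectrum of $I_\iter$, hence gives two-sided operator control of the preconditioner $I_\iter^{-1}$; with the constants $\mu_m,\mu_M$ as they enter the statement, the upper bound turns the quadratic term into $\tfrac{L\pas_\iter^2}{2}\mu_M^2\norm{v_\iter}^2$, whose conditional expectation is at most $\tfrac{L\mu_M^2 C}{2}\pas_\iter^2$ (the $\mu_M^2CL$ term), while the lower bound turns the inner-product term into the strict descent $-\mu_m\pas_\iter\norm{\nabla_\para F\p{\para_{\iter-1}}}^2$, which is the source of the $2\mu_m\sum\pas_l$ in the denominator.

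\textbf{Main obstacle: the coupling between $I_\iter$ and $v_\iter$.} The delicate point is that $I_\iter$ and $v_\iter$ are built from the \emph{same} latent draws $z_i^\iter$, so $I_\iter^{-1}$ is not $\mathcal F_{\iter-1}$-measurable and the inner-product term does not factorize, $\expectation\brackets{I_\iter^{-1}v_\iter\cond\mathcal F_{\iter-1}}\neq\expectation\brackets{I_\iter^{-1}\cond\mathcal F_{\iter-1}}\,\expectation\brackets{v_\iter\cond\mathcal F_{\iter-1}}$; hence one cannot read off the descent directly from unbiasedness. I would resolve this by exploiting the recursion $\Delta_i^\iter=\p{1-\pas_\iter}\Delta_i^{\iter-1}+\pas_\iter\nabla_\para\log\comp\p{y_i,z_i^\iter;\para_{\iter-1}}$: the current draw enters $\Delta_i^\iter$ only through the $\pas_\iter$-weighted increment, so $I_\iter$ differs from an $\mathcal F_{\iter-1}$-measurable matrix by a term of order $\pas_\iter$. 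Decomposing $v_\iter=-\nabla_\para F\p{\para_{\iter-1}}+\xi_\iter$ with $\expectation\brackets{\xi_\iter\cond\mathcal F_{\iter-1}}=0$, the centered cross-term vanishes at leading order against the measurable part of $I_\iter^{-1}$, and the $O\p{\pas_\iter}$ correction — already carrying the outer factor $\pas_\iter$ and controlled through Assumptions~\ref{ass:C} and~\ref{ass:eigen} — only feeds the $\pas_\iter^2$ noise term. This coupling, rather than any single estimate, is where the real work lies.

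\textbf{Summation.} Taking total expectations in the resulting one-step inequality $\expectation\brackets{F\p{\para_\iter}\cond\mathcal F_{\iter-1}}\leq F\p{\para_{\iter-1}}-\mu_m\pas_\iter\norm{\nabla_\para F\p{\para_{\iter-1}}}^2+\tfrac{L\mu_M^2C}{2}\pas_\iter^2$, summing over the iterations and telescoping the $F$-values, then using $\min F\leq\expectation\brackets{F\p{\para_{\iter+1}}}$, gives $\mu_m\sum_{l=0}^\iter\pas_l\,\expectation\brackets{\norm{\nabla_\para F\p{\para_l}}^2}\leq F\p{\para_0}-\min F+\tfrac{L\mu_M^2C}{2}\sum_{l=0}^\iter\pas_l^2$. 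Finally I would bound $\expectation\brackets{\min_{0\leq l\leq\iter}\norm{\nabla_\para F\p{\para_l}}^2}$ by the $\pas_l$-weighted average $\tfrac{\sum_l\pas_l\,\expectation\brackets{\norm{\nabla_\para F\p{\para_l}}^2}}{\sum_l\pas_l}$ and divide, which yields exactly the stated bound; Assumption~\ref{ass:stepsize} then forces the right-hand side to vanish as $\iter\to\infty$.
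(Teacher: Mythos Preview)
Your proposal follows essentially the same route as the paper: apply the descent lemma from the $L$-Lipschitz gradient, take conditional expectation and use Fisher's identity so that $v_\iter$ is an unbiased estimator of $-\nabla_\para F(\para_{\iter-1})$, extract $\mu_m$ from the inner-product term and $\mu_M^2$ from the quadratic term via the eigenvalue bounds, bound the second moment by $C$, telescope, and pass from the $\pas_l$-weighted sum to the minimum.

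The one place you diverge is the paragraph you flag as the ``main obstacle'': the coupling between $I_\iter$ and $v_\iter$ through the common draws $z_i^\iter$. The paper does not carry out this step. In its proof the preconditioner is written as $\widehat I(\para_\iter)$ --- the deterministic FIM estimate defined at the top of the appendix as an outer product of \emph{conditional expectations} --- and is simply pulled outside the conditional expectation, yielding $-\pas_\iter\|\widehat I(\para_\iter)^{-1/2}\nabla_\para F(\para_\iter)\|^2$ in one line. In other words, the paper sidesteps the measurability issue by treating the preconditioner as a function of $\para_\iter$ alone rather than as the stochastic $I_\iter$ actually computed in Algorithm~\ref{alg:1}. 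Your decomposition via the $\Delta$-recursion (splitting $I_\iter$ into an $\mathcal F_{\iter-1}$-measurable part plus an $O(\pas_\iter)$ correction) is more careful than what the paper does and is arguably what is required for the algorithm as stated; but relative to the paper's own proof it is extra work that they do not perform.
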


Proof: See Appendix~\ref{sec:app:proofs}.

The control bound in Theorem \ref{th:prop_post} goes well to $0$ when $k$ goes to infinity and is similar to those obtained by, e.g., \citet{Bottou} or \citet{Ghadimi} for the standard stochastic gradient algorithm.

\subsubsection{Second case: simulating from a  kernel}

In this context, we need  additional assumptions on the Markov chain. Let us introduce the following notations: for a measurable function $V:\latspace \rightarrow [1, +\infty) $, a measure $\mu$ on the $\sigma$-field of $\latspace$ and a function $f:\latspace \rightarrow \mathbb{R}$, we define $$\abs{f}_V= \sup_{z \in \latspace} \frac{\abs{f(z)}}{V(z)}, \ \ \ \norm{\mu}_V= \sup_{f, |f|_V\leq 1}\abs{\int f d\mu}. $$

\begin{assumption}
	\label{ass:MCMC}
	\begin{enumerate}
		\item \label{ass:kernel}  For all $\para \in \paraspace$, the transition kernel $\kernel_{\para}$ is a Markov kernel with invariant distribution the posterior $\post_\para$. 
		
		\item \label{ass:drift} There exist $0<\lambda<1$, $0<b$, $p \geq 2$ and a measurable function $W: \latspace \rightarrow [1, +\infty)$ such that $\sup_{\para \in \paraspace}\abs{\nabla_\para \log f(\cdot,\obs;\para)}_W< \infty$ and 
		$\sup_{\para \in \paraspace}\kernel_\para W^p \leq \lambda W^p +b$.
		
		\item \label{ass:ergo} For any $0<\nu<p$, there exist a constant $C$ and $0<\rho<1$ such that for any $z \in \latspace$, $\sup_{\para \in \paraspace}\norm{\Pi_\para^m(z,\cdot)-\post_\para }_{W^\nu} \leq C \rho^m W^\nu(z)$.
		
		\item \label{ass:lipschitz} There exists a constant $C$ such that for any $(\para,\para') \in \paraspace^2$,
		\begin{align*}
			\abs{\nabla_\para \log f(\cdot,\obs;\para) -  \nabla_\para \log f(\cdot,\obs;\para')}_W &\leq C \norm{\para-\para'} \\
			\abs{\post_\para -  \post_{\para'}}_W &\leq C \norm{\para-\para'} \\
			\sup_z   \frac{\norm{\kernel_\para(z,\cdot) -\kernel_{\para'}(z,\cdot)}}{W(z)} &\leq C \norm{\para-\para'}.
		\end{align*}
	\end{enumerate}
\end{assumption}

These assumptions are standard sufficient conditions in the literature to ensure the uniform ergodicity of the Markov chain with respect to $\para$ and the existence of a solution to the Poisson equation associated to function $\nabla_\para \log f(\cdot,\obs;\para)$ (see for example \citet{allassonniere2015, fort2011}). Concerning the logistic growth nonlinear mixed effects model defined in section~\ref{subsec:mixedeffects} using (\ref{eq:orangetrees}), the function $W$ can be chosen equal to $W(z)=1+\norm{z}^2 $.

\begin{assumption}
	\label{ass:stepsizeadd}
	The sequence of step sizes  $(\pas_k)$ satisfies $\sum \abs{\pas_{k+1}-\pas_{k}} <\infty $.
\end{assumption}
This step size assumption is classical when controlling stochastic approximation and is satisfied in particular when the step size sequence is decreasing.

\begin{theorem}
	\label{th:prop_kernel}
	Under Assumptions~\ref{ass:model}, \ref{ass:nonconvex},  \ref{ass:MCMC},  \ref{ass:stepsizeadd} and assuming that $\paraspace$ is bounded, the iterates $(\para_\iter)_\iter$ defined in Algorithm~\ref{alg:1} with $\prop$ equal to a transition kernel  $\kernel_{\theta_{k-1}}$ satisfy the convergence guarantee
	\begin{align*}
		\expectation\brackets{\underset{0\leq l\leq k}{\min}\norm{\nabla_\para F(\para_l)}^2} \leq&\; \dfrac{2(F(\para_0)-\min F)}{2\mu_{m}\sum_{l=0}^\iter\pas_l}\\
		&+\dfrac{\mu_{M}^2CL\sum_{l=0}^\iter\pas_l^2+C}{2\mu_{m}\sum_{l=0}^\iter\pas_l}.
	\end{align*}
\end{theorem}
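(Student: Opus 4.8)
The plan is to reproduce, almost verbatim, the $L$-smoothness descent argument already used for Theorem~\ref{th:prop_post}, and to isolate as the single new ingredient the bias created by drawing $z_i^\iter$ from the kernel $\kernel_{\para_{\iter-1}}$ instead of from the exact posterior $\post_{\para_{\iter-1}}$. This bias is precisely what turns into the extra additive constant $C$ appearing in the numerator of the noise term, and controlling it is the heart of the proof.

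First I would write the descent inequality coming from Assumption~\ref{ass:L}: using the update $\para_\iter-\para_{\iter-1}=\pas_\iter I_\iter^{-1}v_\iter$,
\begin{equation*}
	F\p{\para_\iter}\leq F\p{\para_{\iter-1}}+\pas_\iter\,\p{\nabla_\para F\p{\para_{\iter-1}}}^{T}I_\iter^{-1}v_\iter+\frac{L\pas_\iter^{2}}{2}\norm{I_\iter^{-1}v_\iter}^{2}.
\end{equation*}
The preconditioner enters only through its spectrum: the uniform eigenvalue bounds (Assumption~\ref{ass:eigen}) give the upper bound $\norm{I_\iter^{-1}v_\iter}^{2}\leq\mu_M^{2}\norm{v_\iter}^{2}$ on the second-order term, whose expectation is at most $\mu_M^{2}C$ by Assumption~\ref{ass:C}, and a lower bound $\mu_m\norm{\nabla_\para F}^{2}$ on the quadratic form $\p{\nabla_\para F}^{T}I_\iter^{-1}\nabla_\para F$. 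Splitting the cross term as
$\p{\nabla_\para F}^{T}I_\iter^{-1}v_\iter=-\p{\nabla_\para F}^{T}I_\iter^{-1}\nabla_\para F+\p{\nabla_\para F}^{T}I_\iter^{-1}\p{v_\iter+\nabla_\para F}$
therefore bounds the first piece by $-\mu_m\norm{\nabla_\para F}^{2}$, while the residual $\p{\nabla_\para F}^{T}I_\iter^{-1}\p{v_\iter+\nabla_\para F}$ collects all the stochastic error, including the coupling between the random $I_\iter^{-1}$ and $v_\iter$. Since $I_\iter^{-1}$ is uniformly bounded, this residual is a regular function of the current draw, so its accumulation can be handled by stochastic-approximation machinery.

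The crucial difference with Theorem~\ref{th:prop_post} is that, under kernel sampling, $\expectation\brackets{v_\iter\cond\mathcal{F}_{\iter-1}}$ is no longer $-\nabla_\para F\p{\para_{\iter-1}}$, so $v_\iter+\nabla_\para F$ is not centered. To control its weighted sum I would introduce $H_\para(z)=\nabla_\para\log\comp\p{\obs,z;\para}$ and the associated Poisson equation
\begin{equation*}
	\widehat H_\para-\kernel_\para\widehat H_\para=H_\para-\post_\para\p{H_\para},\qquad\post_\para\p{H_\para}=-\nabla_\para F\p{\para},
\end{equation*}
the identity on the right being the Fisher identity~\eqref{eq:fisheridentity}. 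Assumptions~\ref{ass:drift}, \ref{ass:ergo} and~\ref{ass:lipschitz} (drift, uniform $W$-geometric ergodicity and Lipschitz regularity in $\para$ of $H_\para$, of $\post_\para$ and of $\kernel_\para$) guarantee that $\widehat H_\para$ exists, is bounded in $W$-norm and is Lipschitz in $\para$, uniformly over the bounded set $\paraspace$, exactly as in \citet{fort2011,allassonniere2015}. The standard telescoping of $\widehat H_{\para_{\iter-1}}\p{z^\iter}-\kernel_{\para_{\iter-1}}\widehat H_{\para_{\iter-1}}\p{z^{\iter-1}}$ then decomposes the accumulated error into a martingale part (square-summable against $\sum\pas_\iter^{2}$), a part controlled by $\sum\abs{\pas_{\iter+1}-\pas_\iter}$ via Assumption~\ref{ass:stepsizeadd}, a part proportional to $\norm{\para_\iter-\para_{\iter-1}}=\pas_\iter\norm{I_\iter^{-1}v_\iter}$ through the Lipschitz regularity of $\widehat H_\para$ (again summable against $\sum\pas_\iter^{2}$), and boundary terms that are finite because $\paraspace$ is bounded and $\widehat H_\para$ is uniformly bounded.

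Finally I would take expectations, telescope the descent inequality over $l=0,\dots,\iter$, and collect terms: the descent part yields $\mu_m\sum_l\pas_l\,\expectation\norm{\nabla_\para F\p{\para_l}}^{2}$, the second-order part yields $\tfrac{\mu_M^{2}CL}{2}\sum_l\pas_l^{2}$, and the Poisson decomposition contributes a finite constant. Bounding $\min_{0\leq l\leq\iter}\norm{\nabla_\para F\p{\para_l}}^{2}$ by the $\pas_l$-weighted average and dividing by $2\mu_m\sum_l\pas_l$ produces exactly the stated bound, the accumulated bias appearing as the extra $+C$ in the numerator. The \emph{main obstacle} is this last step in spirit but the third paragraph in practice: establishing existence, boundedness and $\para$-Lipschitz regularity of the Poisson solution and verifying that every term in its telescoping expansion is summable; the descent and preconditioning estimates are routine and identical to those of Theorem~\ref{th:prop_post}.
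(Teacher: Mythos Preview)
Your proposal is correct and follows essentially the same approach as the paper: the $L$-smoothness descent inequality, the eigenvalue bounds on $I_\iter^{-1}$, and the Poisson-equation decomposition of the bias into a martingale increment, an Abel/telescoping remainder controlled via $\sum\abs{\pas_{\iter+1}-\pas_\iter}$, a Lipschitz-in-$\para$ remainder controlled via $\sum\pas_\iter^{2}$, and bounded boundary terms, all exactly as in the paper (which packages the last step as a separate lemma invoking \citet{fort2011}). The only cosmetic difference is your sign convention for $H_\para$.
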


Proof: See Appendix~\ref{sec:app:proofs}.

This result extends that of Theorem~\ref{th:prop_post} to the setting often used in practice when it is not possible to generate the latent variables from the posterior distribution. We obtain a similar bound as the one in Theorem \ref{th:prop_post} with an additional residual term that appears because of the MCMC procedure used to simulate $z^k$. 
The proof is postponed to the supplementary material and relies on technical tools involving Poisson equation solution for the control of Markov chain where the parameter evolves simultaneously. 

\subsection{Description of the algorithm in the non-independent case}

In the non-independent case, the algorithm should be adapted as the log-likelihood is not separable into terms involving only one $Z_i$. The proposed methodology uses the global criterion $f\p{y,z;\para_\iter}$. The adapted algorithm is provided in Algorithm~\ref{alg:nonindep}. There is no general formula to write the estimation of the Fisher information matrix in this case. See the adaptation for the SBM in Section~\ref{subsec:SBM} for an example.

\begin{algorithm}
	\caption{Fisher-SGD in the non-independent case}
	\label{alg:nonindep}
	\begin{algorithmic} 
		\STATE{\bfseries Input:} $z_0,\para_0$, $y$, $r$
		\FOR{\iter=1,\dots,K}  
		\STATE{$z^{\iter} \sim \prop $ where $\prop$ is either the posterior $\post_{\para_{\iter-1}} \p{ \cdot \cond y }$ or a Markov kernel $\kernel_{\para_{\iter-1}} \p{ \cdot \cond \cdot, y }$ }
		
		\STATE{Compute $v_\iter=\nabla_\para\log\comp \p{y,z^\iter;\para_{\iter-1}}$}
		\STATE{Compute $I_\iter$ with a custom method.}
		\STATE{Set $\para_\iter=\para_{\iter-1}+\pas_\iter I_\iter^{-1}v_\iter$}
		\IF{$k>K-r$}
		\STATE{Compute $\nabla_\para^2\log\comp\p{y,z^\iter;\para_{\iter}}$}
		\ENDIF
		\ENDFOR
		\STATE{\bfseries Output:} Estimated parameter: $\para_\iter$, estimated Fisher information matrix of the whole sample: $\frac1r\sum_{k=K-r+1}^K\nabla_\para^2\log\comp\p{y,z^\iter;\para_{k}}$, sample of latent in the posterior distribution: $\p{z^{K-r+1}\cdots z^K}$.
	\end{algorithmic}
\end{algorithm}

\begin{remark}
	Theorems \ref{th:prop_post} and \ref{th:prop_kernel} can be immediately extended to the non-independent setting. 
\end{remark}

\subsection{Practical implementation}\label{sec:practical}

The implementation of the algorithm should be done in practice with some precautions, in
particular at the beginning of the algorithm for the learning step $\pas_k$
and the calculation of the preconditioning matrix $I_\iter$.

\subsubsection{Evolution of the learning step}

The authors propose a strategy in three steps, similar to the warm-up strategies used for example by \citet{loshchilov2016sgdr} and \citet{smith2019super}:

\begin{description}
	\item[pre-heating:] first, at the very beginning of the algorithm, the learning step is
	gradually increased following an exponential growth, starting from a very
	small value until reaching $1$.
	\item[heating:] then, the step is kept at $1$ during a certain time, constituting
	the heating period.
	\item[decreasing steps:] after stabilization, the steps are decreasing (such that $\sum_k
	\gamma_k=+\infty$ and $\sum_k \gamma_k^2<+\infty$).
\end{description}

We thus obtain:
\begin{equation*}
	\pas_\iter=
	\begin{cases}
		\pas_0^\p{1-\frac{\iter}{K_{\text{pre-heating}}}} &\text{ if } \iter \le K_{\text{pre-heating}}  \\
		1 &\text{ else if } \iter\le K_{\text{heating}}\\
		\p{\iter-K_{\text{heating}}}^{-\alpha} &\text{ else}
	\end{cases}
\end{equation*}

The authors propose to use $K_{\text{pre-heating}}=1000$ and $\pas_0=10^{-4}$. The proposed setting seems to be conservative for most situations, but $K_{\text{pre-heating}}$ and $\pas_0$ can be respectively increased and decreased to improve stabilization of the pre-heating phase.

Concerning the choice of $K_{\text{heating}}$, the authors propose to use an
adaptive method, averaging the norms of the gradients calculated with a third
order filter of constant $\frac1{1000}$ and to stop the heating phase when the norm of
the averaged gradient does not decrease anymore.

Concerning the choice of $\alpha$, to ensure $\sum_k
\gamma_k=+\infty$ and $\sum_k \gamma_k^2<+\infty$, the authors propose to use $\alpha=\sfrac23$. 

The complete algorithm using the
preheating and the adaptive length heating is given in the Appendix in Algorithm~\ref{alg:detailled}.

\subsubsection{Preconditioning matrix}

At the beginning of the algorithm, a bad preconditioning can lead to unstable
behavior. Thus during the whole preheating period, instead of using $I_\iter$
as defined in algorithm~\ref{alg:1}, we use a stabilized version. Let us introduce
$$ \widehat I^*_\iter
= \frac{1}{N}\sum_{i=1}^N\Delta_i^{\iter}\p{\Delta_i^{\iter}}^T $$
and define the new preconditioner by:
\begin{align*}
	I_\iter=&\begin{cases}
		\p{1-\pas_\iter}r_\iter Id+\pas_\iter\widehat I^*_\iter & \text{ if } \iter<K_{\text{pre-heating}}\\
		\widehat I^*_\iter &\text{ otherwise}.
	\end{cases}
\end{align*}

We can choose $r_\iter=1$ or, to further avoid
instabilities we can choose $r_\iter = \max\p{1,
	\operatorname{tr}\p{\widehat I^*_\iter}}$.

The complete algorithm using this stabilization is given in the Appendix in Algorithm~\ref{alg:detailled}.

\subsubsection{Auto-differentiation and parametrization}
Nowadays, there is high interest in using automatic differentiation over analytical calculation of gradients. The authors propose to compute all gradients by means of automatic
differentiation.

The parameters of the statistical models are rarely all parameters in $\mathbb
R^d$, but the algorithm is presented in the framework of $\paraspace=\mathbb
R^d$. Using the functional invariance property of maximum likelihood, the
authors propose to systematically reparametere to $\mathbb R^d$, through a
bijective reparametrization. For obvious reasons of derivatives use, these
reparametrizations must be diffeomorphisms and must be differentiable by
automatic differentiation to be used transparently in the framework proposed
here.  The parametrization Cookbook \cite{paramcookbook} introduces a set of classical
reparametrizations in statistics that verify these properties. The obtained
maximum likelihood properties (and, in particular, the confidence intervals) can
then be transferred to the initial space using a delta method \cite{vandervaart2000}.

\section{Numerical experiments}\label{sec:numerical}

\subsection{Logistic growth mixed-effects model}
We generated data according to the logistic growth model presented in Section~\ref{sec:examples}, with $\nobs=1000$ individuals, with the same vector of observation times for each individual, defined as a vector of $m=20$ equally spaced values between 100 and 1500, and using the following parameter values: $\beta = \p{200,500}^T, \Gamma = \operatorname{diag}\p{40,100}$, $\alpha = 150$ and $\sigma = 10$. 
Using a reparametrization, Algorithm~\ref{alg:1} was run in the reparameterized space $\mathbb{R}^d$ with $d=7$. The code is available in the Git repository \url{https://github.com/baeyc/fisher-sgd-nlme}.

To evaluate the performance and the robustness of our approach with respect to maximum likelihood estimate and confidence regions, we compared our algorithm to the competing MCMC-SAEM algorithm \cite{kuhn2004}. This algorithm is, to the best of our knowledge, the only other one providing theoretical convergence guarantees towards the MLE when the model belongs to the curved exponential family. Since we introduced a fixed effect in the model, the considered model does no longer belong to the exponential family. Therefore, we used a specific implementation of the MCMC-SAEM algorithm \citep{saemix} which rely on an exponentialization trick for models that do not belong to the curved exponential family, but is only usable for some specific nonlinear mixed-effects models. It is also noteworthy to mention that this algorithm relies on a block-diagonal estimate of the FIM, which has no particular reason to be block-diagonal in general.

The Fisher-SGD algorithm (we used the version given in Algorithm~\ref{alg:1} using a Metropolis-within-Gibbs sampler) was run on $M$=1000 datasets generated using the same parameter values. The competing MCMC-SAEM algorithm was run on the same simulated datasets using the R package \texttt{saemix}. 

Table~\ref{tab:emp_cov_logistic} gives the root mean squared errors (RMSE) associated with each parameter and the empirical coverages computed as the proportion of the $M$ datasets for which the true parameter value used to generate the data felt into the $95\%$ confidence region. The confidence regions were built using automatic differentiation and the delta method (see Appendix~\ref{sec:app:reparam} for details).
Our approach performed better than the MCMC-SAEM algorithm, especially for variance components parameters, i.e. for $\Gamma_{11}$, $\Gamma_{12}$ and $\Gamma_{22}$. Since the RMSE are similar with both algorithms, these results suggest that the FIM estimate obtained with Fisher-SGD is more accurate than the one obtained with the \texttt{saemix} package.

\begin{table}[t]
	\begin{center}
		\caption{Root mean squared error (RMSE) and empirical coverage of confidence regions built at the nominal level of 0.95 using the FIM and the parameter estimates, for a total of $M=1000$ repetitions. The first line corresponds to the vector of all parameters $\para$, and thus the coverage is associated to the confidence region in $\mathbb{R}^7$. The simulated values for the parameters are $\beta_1=200, \beta_2=500$, $\alpha = 150$, $\Gamma_{11}=40$, $\Gamma_{12} = 0$, $\Gamma_{22} = 100$ and $\sigma^2=100$.    }
		\label{tab:emp_cov_logistic}
		\vskip 0.15in
		\begin{small}
			\begin{sc}
				\begin{tabular*}{\columnwidth}{ccccc}
					\toprule
					\multirow{2}{*}{Type} & \multicolumn{2}{c}{Fisher-SGD} & \multicolumn{2}{c}{MCMC-SAEM} \\
					& RMSE & Coverage & RMSE & Coverage \\
					\midrule
					$\para$ & 15.13 & 0.952 $\pm$ 0.014 & 17.24 & 0.935 $\pm$ 0.015 \\
					$\beta_1$ & 0.234 & 0.942 $\pm$ 0.012 & 0.236 & 0.941 $\pm$ 0.015 \\
					$\beta_2$ & 0.586 & 0.958 $\pm$ 0.010 & 0.625 & 0.941 $\pm$ 0.015\\
					$\alpha$ & 0.414 & 0.972 $\pm$ 0.013 & 0.416 & 0.968 $\pm$ 0.011\\
					$\Gamma_{11}$ & 2.221 & 0.951 $\pm$ 0.013 & 2.241 & 0.949 $\pm$ 0.014 \\
					$\Gamma_{12}$ & 4.156 & 0.948 $\pm$ 0.014 & 4.334 & 0.935 $\pm$ 0.015\\
					$\Gamma_{22}$ & 14.324 & 0.948 $\pm$ 0.014 & 16.492 & 0.905 $\pm$ 0.018 \\
					$\sigma^2$ & 1.005 & 0.957 $\pm$ 0.012 & 1.010 & 0.951 $\pm$ 0.013\\
					\bottomrule
				\end{tabular*}
			\end{sc}
		\end{small}
		\vskip -0.1in
	\end{center}
\end{table}

As an illustration, Figure~\ref{fig:evol_theta} gives the evolution of one of the $M$ trajectories, in the original parameter space. We can see that the algorithm reaches a neighborhood of the true value at the end of the pre-heating phase, stabilizes itself around this true value during the heating phase and reaches convergence during the last phase. Figure~\ref{fig:evol_fim} in Appendix~\ref{sec:app_logistic_model} gives the evolution of the diagonal of the estimated FIM, along with the evolution of the learning step across the iterations.

\begin{figure}[bt]
	\centering
	\includegraphics[width=\columnwidth]{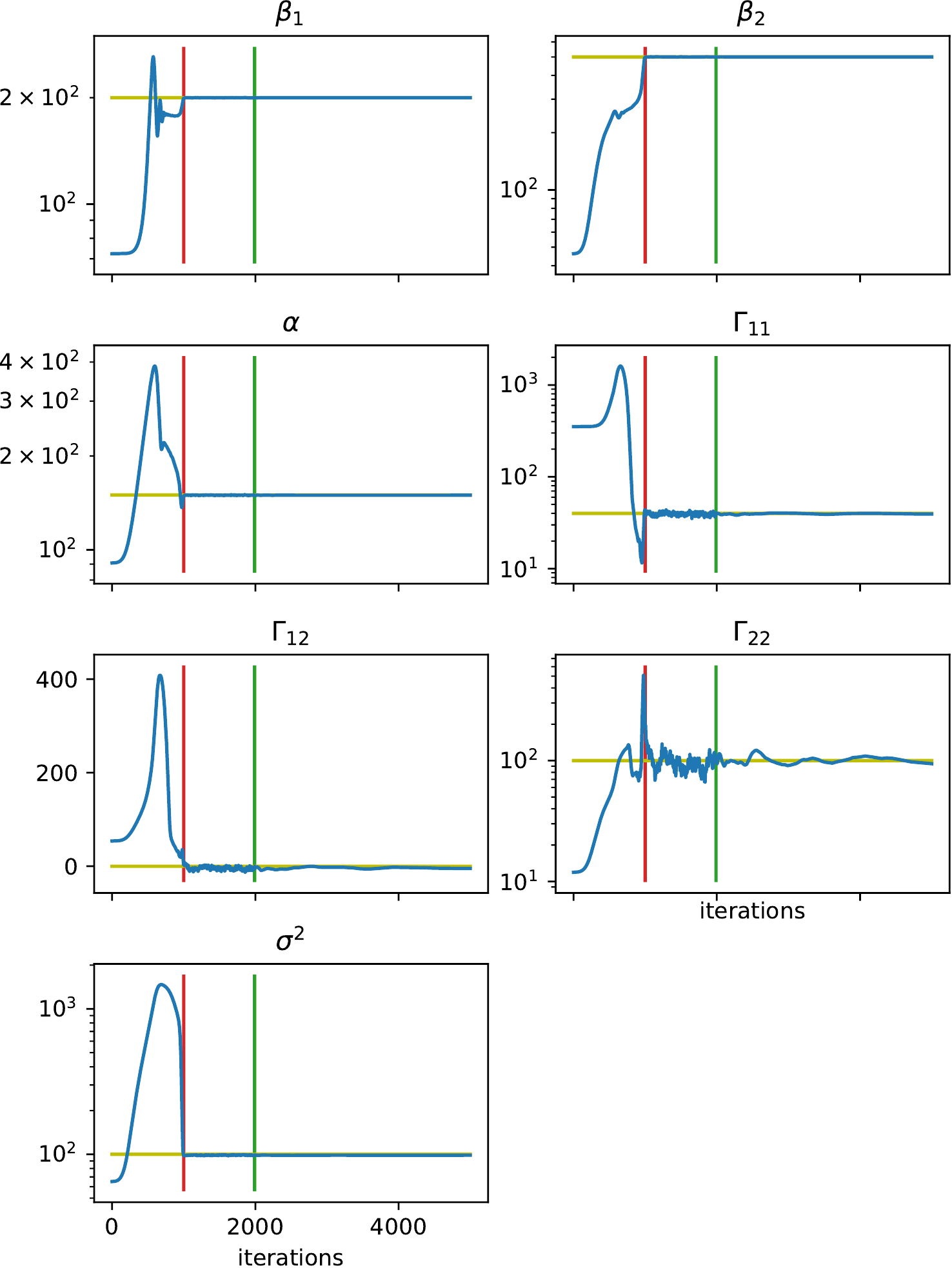}
	\caption{Evolution of the parameter estimates across the iterations in the logistic growth model. Yellow line: simulated value. The red line is the end of the pre-heating, and the green line is the end of the heating. }
	\label{fig:evol_theta}
\end{figure}

\subsection{Stochastic Block model}

We generated $2000$ simulated networks according to the Stochastic Block Model presented above in Section~\ref{sec:examples}, with $K=4$ groups and with $\nobs=100$ nodes or $\nobs=200$ nodes.

All networks are generated from the same set of parameters, we choose:
\begin{align*}
	\alpha&=\p{\sfrac14,\sfrac14,\sfrac14,\sfrac14}\\
	p&=\begin{bmatrix}
		\sfrac23&\sfrac23&\sfrac13&\sfrac23\\
		\sfrac23&\sfrac23&\sfrac23&\sfrac13\\
		\sfrac13&\sfrac23&\sfrac23&\sfrac23\\
		\sfrac23&\sfrac13&\sfrac23&\sfrac23\\
	\end{bmatrix}
\end{align*}

As $\alpha^Tp$ and $p\alpha$ are constant vectors, expected inner and outer degrees of node do not depend on clusters, therefore this simulation setting is not an easy case where naive algorithm can be applied, \textit{e.g.} \citet{channarond2012classification}.

As described in Section~\ref{sec:examples}, parameters are in a constrained space. To handle these constraints, a reparametrization is applied (see Appendix~\ref{sec:app:reparam} for details). The complete algorithm used for estimation in Stochastic Block Model is a particular case of Algorithm~\ref{alg:nonindep} where the preconditionning matrix is computed with the same idea than for the independent case and the sampling is made with a Gibbs sampler. The complete algorithm is given in Algorithm~\ref{alg:SBM} in the appendix. However, this algorithm is not sufficient to handle all the cases and for some initializations the sampling and the algorithm behavior leads to empty classes: in these cases the algorithm is restarted from the beginning with a new random initialization. The code is available in the Git repository \url{https://gitlab.com/jbleger/sbm\_with\_fisher-sgd}.

As other model-based classification methods, SBM is subject to label-switching and the parameter is identifiable up to a permutation of classes. To compare the estimation to the simulated value, classes are permuted to maximize the congruence between the posterior of $Z$ and the simulated value of $Z$.

The main advantage of our method is that we obtain an estimate of the FIM. As we have the asymptotic normality property \cite{bickel2013asymptotic}, we can compute asymptotic confidence interval of parameters. Then we illustrate our method by evaluating the estimation error with root mean squared error (RMSE) and by evaluating the coverage of 95\% confidence interval obtained with the parameter estimate and the Fisher Information Matrix estimate. See Appendix~\ref{sec:app:reparam} for details.

We chose not to compare ourselves to other methods, since to the authors' knowledge there is no method computing the MLE for not small SBM networks, disallowing computation of asymptotic confidence intervals.

\begin{figure}[ht]
	\centering
	\includegraphics[width=\columnwidth]{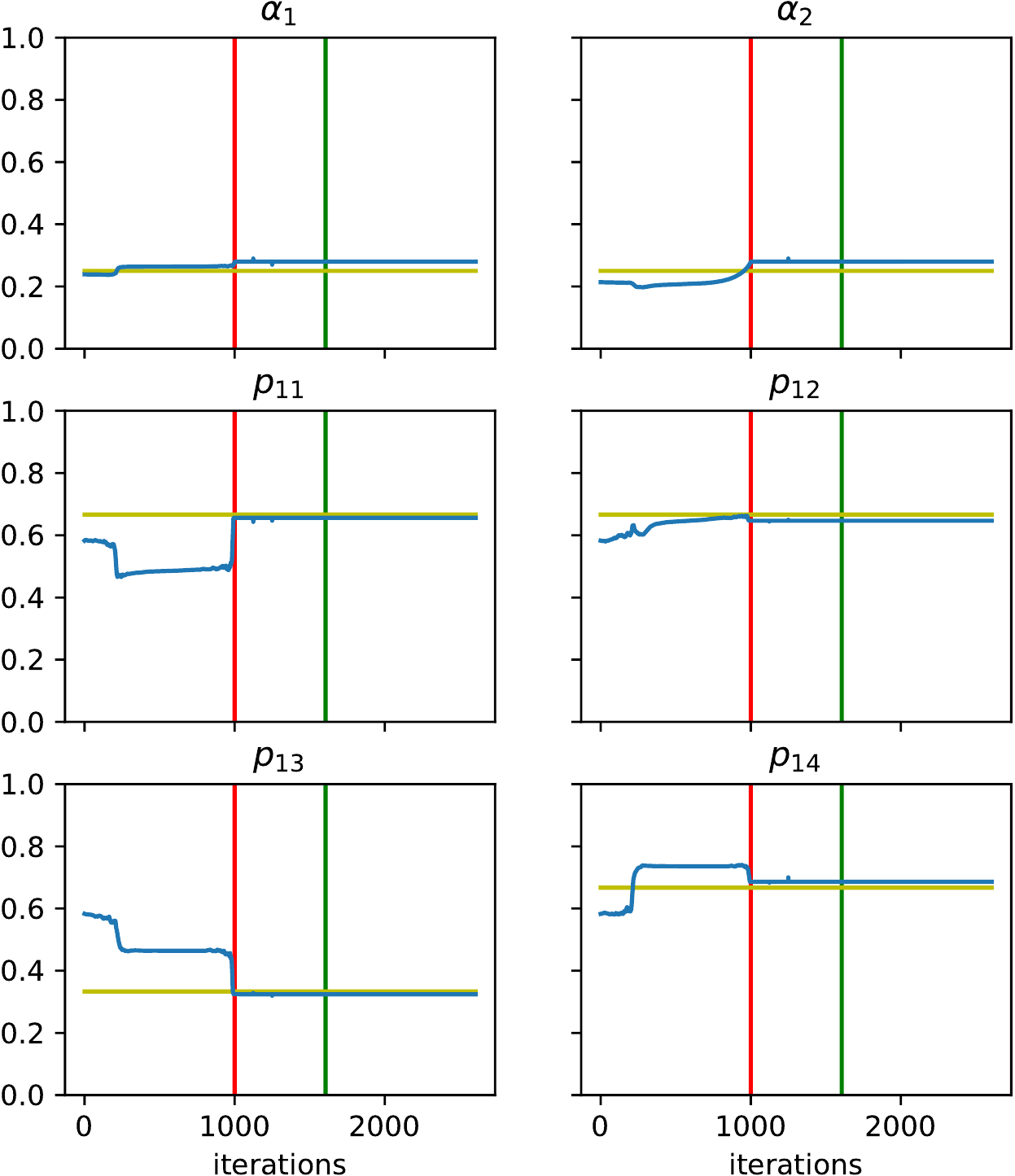}
	\caption{Evolution of the parameter estimates across the iterations with $\nobs=100$ in the stochastic block model. Yellow line: simulated value. The red line is the end of the pre-heating, and the green line is the end of the heating. Results for all parameters are given in appendix Figures~\ref{fig:sbm_all_alpha} and~\ref{fig:sbm_all_pi}.}
	\label{fig:sbm_main}
\end{figure}

In Figure~\ref{fig:sbm_main}, we show the evolution of parameter estimates (after transformation in the original parameter space). We see here the practical importance of the pre-heating phase: when latent variable are not acceptable, rapid evolution of parameters can lead to non convergent algorithm.

\begin{table}[ht]
	\begin{center}
		\caption{Results of Fisher-SGD applied on SBM with $2000$ replications. Root mean squared error (RMSE) and empirical coverage of confidence regions built at the nominal level of 0.95 using the FIM and the parameter estimates, for a total of $M=2000$ repetitions. The first line corresponds to the vector of parameters $\para$, and thus the coverage is associated to the confidence region in $\mathbb{R}^{K^2+K-1}$. See Table~\ref{tab:emp_sbm_details} in Appendix for complete results with $\nobs=100$ and $\nobs=200$.}
		\label{tab:emp_sbm}
		\vskip 0.15in
		\begin{small}
			\begin{sc}
				\begin{tabular*}{\columnwidth}{cccc}
					\toprule
					\multirow{2}{*}{Parameter}&\multirow{2}{*}{Simulated}& \multicolumn{2}{c}{$\nobs=100$} \\
					&& RMSE & Coverage \\
					\midrule
					$\theta$ &         & $0.648$ & $0.936\pm0.011$\\
					$\alpha_1$      & $0.250$ & $0.044$ & $0.943\pm0.010$\\
					$\alpha_2$      & $0.250$ & $0.044$ & $0.939\pm0.010$\\
					$p_{1,1}$       & $0.667$ & $0.023$ & $0.940\pm0.010$\\
					$p_{1,2}$       & $0.667$ & $0.019$ & $0.947\pm0.010$\\
					$p_{1,3}$       & $0.333$ & $0.022$ & $0.948\pm0.010$\\
					$p_{1,4}$       & $0.667$ & $0.019$ & $0.947\pm0.010$\\
					\bottomrule
				\end{tabular*}
			\end{sc}
		\end{small}
	\end{center}
	\vskip -0.1in
\end{table}

Results are presented for $\nobs=100$ for a subset of the original parameters in Table~\ref{tab:emp_sbm}. Results for all parameters for $\nobs=100$ and $\nobs=200$ are given in the appendix in Table~\ref{tab:emp_sbm_details}. We deduce that computed confidence ellipsoid for $\theta$ and confidence intervals for $\alpha$ and $p$ are correct, which validate the MLE and Fisher Information Matrix estimation provided by our algorithm.

\section{Conclusion}\label{sec:conclusion}

In this article, we consider parameter estimation in latent variable models. We propose an efficient stochastic gradient algorithm that includes  a preconditioning  step to scale the different directions of the parameter space, homogenizing the evolution of the algorithm. The preconditioner we use corresponds to a positive definite estimate of the Fisher information matrix in independent data models, which allows to get for free asymptotic confidence intervals for the parameters as a by-product of the algorithm. Theoretical convergence results are provided under mild assumptions for very general latent variables models, without assuming that the density  belongs to the curved exponential density family. Using simulations, we show that our new algorithm performs satisfactorily and gives similar to better performances compare to competing methods. As we also propose a warming procedure, the method is generic enough to be easily implemented in very general latent variable models.

\section*{Funding}
This work was funded by the Stat4Plant project ANR-20-CE45-0012.

\bibliography{calimero_biblio}
\bibliographystyle{plainnat}

\newpage
\appendix

\clearpage
\section{Algorithms}

\subsection{Algorithms in the independent and non independent case with details}

Algorithms~\ref{alg:detailled} and~\ref{alg:detailled_nind} includes practical details as warming and soft-start described in Section~\ref{sec:practical} in respectively the independent case and the non-independent case.
The authors uses the following parameters:
\begin{itemize}
	\item $\gamma_0=10^{-4}$,
	\item $K_{\text{pre-heating}}=1000$,
	\item $C_{\text{heating}}=\frac1{1000}$,
	\item $\alpha=\frac23$,
\end{itemize}

\begin{algorithm}
	\caption{Fisher-SGD in the independent case with details}
	\label{alg:detailled}
	\begin{algorithmic} 
		\STATE{\bfseries Input:} $z_0,\para_0$, $y_1,\dots,y_N$
		\STATE{\bfseries Algorithm parameters:} $\gamma_0, K_{\text{pre-heating}}, C_{\text{heating}}, \alpha$
		\STATE Initialize a 3-order mean filter with constant $C_{\text{heating}}$ to compute mean grad.
		\STATE{Set heating as not finished}
		\FOR{\iter=1,\dots,K}
		\STATE\COMMENT{\textit{The following loop is operated with vector calculus without explicit loop.}}
		\FOR{ i=1,\dots,N}
		\STATE{$z_i^{\iter} \sim \kernel_{\para_{\iter-1}} \p{\cdot \cond z_i^{\iter-1}, \obs_i}$ where $\kernel_{\para} \p{\cdot \cond \cdot, \obs_i}$ is a transition kernel of a MCMC having $\post_\para\p{ \cdot \cond y_i }$ as stationary distribution for all $\para$}
		\ENDFOR
		\IF{$k<K_{\text{pre-heating}}$}
		\STATE{Set $\gamma_k=\exp\p{\p{1-k/K_{\text{pre-heating}}}\log \gamma_0}$}
		\ELSIF{heating not finished}
		\STATE{Set $\gamma_k=1$}
		\ELSE
		\STATE{Set $\gamma_k=\p{k-K_{\text{end-heating}}}^{-\alpha}$}
		\ENDIF
		\STATE\COMMENT{\textit{The following loop is computed without explicit loop, all the gradients are computed in one step as the jacobian of the vector of criterion, and $\Delta$ with vector calculus.}}
		\FOR{ i=1,\dots,N} 
		\STATE{ Compute $J_i^{\iter}=\nabla_\para\log\comp\p{y_i,z_i^\iter;\para_{\iter-1}}$}
		\STATE{ Compute $\Delta_i^{\iter}=\p{1-\pas_\iter}\Delta_i^{\iter-1}+\pas_\iter J_i^\iter$}
		\ENDFOR
		\STATE{Compute $v_\iter=\frac{1}{\nobs}\sum_{i=1}^\nobs J_i^\iter$}
		\STATE\COMMENT{\textit{The following loop is computed with vector calculus}}
		\STATE{Compute  $I_\iter=\dfrac{1}{N}\sum_{i=1}^N\Delta_i^{\iter}\p{\Delta_i^{\iter}}^T$}
		\IF{$k<K_{\text{pre-heating}}$}
		\STATE{Set $P_k=\p{1-\gamma_k}\max\p{1,\operatorname{tr}\p{I_\iter }}Id+\gamma_k I_\iter$}
		\ELSE
		\STATE{Set $P_k=I_\iter$}
		\ENDIF
		\IF{$k>K_{\text{pre-heating}}$ and heating not finished}
		\STATE{Update 3-order mean filter with $v_k$}
		\IF{norm-2 of 3-order mean of gradient is increasing}
		\STATE{Set heating finished}
		\STATE{Set $K_{\text{end-heating}}=k$}
		\ENDIF
		\ENDIF
		\STATE{Set $\para_\iter=\para_{\iter-1}+\pas_\iter
			P_k^{-1}v_\iter$}
		\ENDFOR
		\STATE{\bfseries Output:} Estimated parameter: $\para_\iter$, estimated Fisher information matrix of the whole sample:  $NI_k$, sample of latent in the posterior distribution: $\p{z^{K-r+1}\cdots z^K}$.
	\end{algorithmic}
\end{algorithm}

\begin{algorithm}
	\caption{Fisher-SGD in the non independent case with details}
	\label{alg:detailled_nind}
	\begin{algorithmic} 
		\STATE{\bfseries Input:} $z_0,\para_0$, $y$
		\STATE{\bfseries Algorithm parameters:} $\gamma_0, K_{\text{pre-heating}}, C_{\text{heating}}, \alpha$
		\STATE Initialize a 3-order mean filter with constant $C_{\text{heating}}$ to compute mean grad.
		\STATE{Set heating as not finished}
		\FOR{\iter=1,\dots,K}
		\STATE{$z^{\iter} \sim \kernel_{\para_{\iter-1}} \p{\cdot \cond z^{\iter-1}, \obs}$ where $\kernel_{\para} \p{\cdot \cond \cdot, \obs}$ is a transition kernel of a MCMC having $\post_\para  \p{ \cdot \cond y }$ as stationary distribution for all $\para$}
		\IF{$k<K_{\text{pre-heating}}$}
		\STATE{Set $\gamma_k=\exp\p{\p{1-k/K_{\text{pre-heating}}}\log \gamma_0}$}
		\ELSIF{heating not finished}
		\STATE{Set $\gamma_k=1$}
		\ELSE
		\STATE{Set $\gamma_k=\p{k-K_{\text{end-heating}}}^{-\alpha}$}
		\ENDIF
		\STATE{Compute $v_\iter=\nabla_\para\log\comp\p{y,z^\iter;\para_{\iter-1}}$}
		\STATE{Compute  $I_\iter$ with custom method.}
		\IF{$k<K_{\text{pre-heating}}$}
		\STATE{Set $P_k=\p{1-\gamma_k}\max\p{1,\operatorname{tr}\p{I_\iter}}Id+\gamma_k I_\iter$}
		\ELSE
		\STATE{Set $P_k=I_\iter$}
		\ENDIF
		\IF{$k>K_{\text{pre-heating}}$ and heating not finished}
		\STATE{Update 3-order mean filter with $v_k$}
		\IF{norm-2 of 3-order mean of gradient is increasing}
		\STATE{Set heating finished}
		\STATE{Set $K_{\text{end-heating}}=k$}
		\ENDIF
		\ENDIF
		\STATE{Set $\para_\iter=\para_{\iter-1}+\pas_\iter
			P_k^{-1}v_\iter$}
		\IF{$k>K-r$}
		\STATE{Compute $\nabla_\para^2\log\comp\p{y,z^\iter;\para_\iter}$}
		\ENDIF
		\ENDFOR
		\STATE{\bfseries Output:} Estimated parameter: $\para_\iter$, estimated Fisher information matrix of the whole sample: $\frac1r\sum_{k=K-r+1}^K\nabla_\para^2\log\comp\p{y,z^\iter;\para_{k}}$, sample of latent in the posterior distribution: $\p{z^{K-r+1}\cdots z^K}$.
	\end{algorithmic}
\end{algorithm}

\begin{algorithm}
	\caption{Fisher-SGD for the Stochastic Block Model with details}
	\label{alg:SBM}
	\begin{algorithmic} 
		\STATE{\bfseries Input:} $z_0,\para_0$, $y$
		\STATE{\bfseries Algorithm parameters:} $\gamma_0, K_{\text{pre-heating}}, C_{\text{heating}}, \alpha$
		\STATE Initialize a 3-order mean filter with constant $C_{\text{heating}}$ to compute mean grad.
		\STATE{Set heating as not finished}
		\FOR{\iter=1,\dots,K}
		\STATE\COMMENT{\textit{Here, a Gibbs sampler is used}}
		\FOR{i in random permutation of 1,\dots,N}
		\STATE{$z_i^{\iter} \sim \comp\p{\cdot \cond \p{z_j^{\iter-1}}_{j\neq i}, y;
				\para_{\iter-1}}$ }
		\ENDFOR
		\IF{$k<K_{\text{pre-heating}}$}
		\STATE{Set $\gamma_k=\exp\p{\p{1-k/K_{\text{pre-heating}}}\log \gamma_0}$}
		\ELSIF{heating not finished}
		\STATE{Set $\gamma_k=1$}
		\ELSE
		\STATE{Set $\gamma_k=\p{k-K_{\text{end-heating}}}^{-\alpha}$}
		\ENDIF
		\STATE\COMMENT{\textit{The following loops is computed without explicit loops, all the gradients are computed in one step as the jacobian of the vector of criterion, and $\Delta^{\text{obs}}$ with vector calculus.}}
		\FOR{ i in 1,\dots,N} 
		\FOR{j in 1,\dots,N}
		\STATE{Compute $J_{ij}^{\iter,\text{obs}}=\nabla_\para\log\comp\p{y_{ij}|z_i^\iter,z_j^\iter;\para_{\iter-1}}$}
		\STATE{Compute $\Delta^{\iter,\text{obs}}_{ij}=\p{1-\pas_\iter}\Delta^{\iter-1,\text{obs}}_{ij}+\pas_\iter J_{ij}^{\iter,\text{obs}}$}
		\ENDFOR
		\ENDFOR
		\STATE\COMMENT{\textit{The following loop is computed without explicit loop, all the gradients are computed in one step as the jacobian of the vector of criterion, and $\Delta^{\text{lat}}$ with vector calculus.}}
		\FOR{ i in 1,\dots,N} 
		\STATE{Compute $J_{i}^{\iter,\text{lat}}=\nabla_\para\log\comp\p{z_{i}^\iter;\para_{\iter-1}}$}
		\STATE{Compute
			$\Delta^{\iter,\text{lat}}_{i}=\p{1-\pas_\iter}\Delta^{\iter-1,\text{lat}}_{i}+\pas_\iter
			J_{i}^{\iter,\text{lat}}$}
		\ENDFOR
		\STATE{Compute $v_\iter=\sum_{ij}J^{\iter,\text{obs}}_{ij}+\sum_iJ^{\iter,\text{lat}}_i$}
		\STATE\COMMENT{\textit{The following computation is computed efficiently with Einstein summation}}
		\STATE{Compute  $I_\iter=\sum_{ij}\Delta^{\iter,\text{obs}}_{ij}\p{\Delta^{\iter,\text{obs}}_{ij}}^T+\sum_{i}\Delta^{\iter,\text{lat}}_{i}\p{\Delta^{\iter,\text{lat}}_{i}}^T$}
		\IF{$k<K_{\text{pre-heating}}$}
		\STATE{Set $P_k=\p{1-\gamma_k}\max\p{1,\operatorname{tr}\p{I_\iter}}Id+\gamma_k I_\iter$}
		\ELSE
		\STATE{Set $P_k=I_\iter$}
		\ENDIF
		\IF{$k>K_{\text{pre-heating}}$ and heating not finished}
		\STATE{Update 3-order mean filter with $v_k$}
		\IF{norm-2 of 3-order mean of gradient is increasing}
		\STATE{Set heating finished}
		\STATE{Set $K_{\text{end-heating}}=k$}
		\ENDIF
		\ENDIF
		\STATE{Set $\para_\iter=\para_{\iter-1}+\pas_\iter
			P_k^{-1}v_\iter$}
		\ENDFOR
		\STATE{\bfseries Output:} Estimated parameter: $\para_\iter$, estimated Fisher information matrix of the whole sample: $\frac1r\sum_{k=K-r+1}^K\nabla_\para^2\log\comp\p{y,z^\iter;\para_{k}}$, sample of latent in the posterior distribution: $\p{z^{K-r+1}\cdots z^K}$.
	\end{algorithmic}
\end{algorithm}

\FloatBarrier

\clearpage
\section{Proofs of the theorems}\label{sec:app:proofs}

Let us first recall the expression of the FIM estimate proposed by  \citet{delattre2019}: 
$$\estFIM{\para} = \dfrac{1}{N}\sum_{i=1}^N \expectation\p{ \nabla_\para \log \comp\p{\obs_i,Z_i;\para} \cond \obs_i ;\para} 
\expectation\p{ \nabla_\para \log \comp\p{\obs_i,Z_i;\para} \cond \obs_i ;\para}^T.    $$

\subsection{First setting: realizations of latent are generated from the posterior distribution}

We adapt the proof of the convergence of the standard stochastic gradient algorithms (see, e.g. \citet{Bottou} or \citet{Ghadimi}) to our algorithm. The specificity in our proof comes from the control of the preconditioner term.

\begin{proof}[Proof of Theorem~\ref{th:prop_post}]
	By Taylor-Lagrange inequality and under Assumption~\ref{ass:nonconvex}.\ref{ass:L}, we get for all $k$
	\begin{align*}
		F(\para_{k+1})\leq& F(\para_k)+\langle\nabla_\para F(\para_k),\para_{k+1}-\para_k\rangle+\dfrac{L}{2}\|\para_{k+1}-\para_k\|^2\\
		\leq& F(\para_k)+\pas_k\langle\nabla_\para F(\para_k),\estFIM{\para_\iter}^{-1}\nabla_\para\log \comp(\obs,\lat^{k+1};\para_k)\rangle\\
		&+\pas_k^2\dfrac{L}{2}\|\estFIM{\para_\iter}^{-1}\nabla_\para\log\comp(\obs,\lat^{k+1};\para_k)\|^2
	\end{align*}
	
	We introduce some notations to  simplify the writing of the proof. Let $H_\para(\lat)=-\nabla_\para \log f(\obs,\lat;\para)$.
	We introduce also the following notation $\expectation_{k}$ for the  expectation taking with respect to the posterior distribution $\post_{\para_k}$.

	Taking the conditional expectation in the previous inequality and noting that $\expectation_{k}(H_{\para_k}(\lat^{k+1})  )= \nabla_\para F(\para_k)$, we get:
	\begin{align*}
		\expectation_{k}[F(\para_{\iter+1})]\leq &F(\para_\iter)-\pas_\iter\langle\nabla_\para F(\para_\iter),\estFIM{\para_\iter}^{-1}\expectation_{k}
		(H_{\para_k}(\lat^{k+1}))
		\rangle\\
		&+\pas_\iter^2\dfrac{L}{2}\expectation_{k}[\|\estFIM{\para_\iter}^{-1}H_{\para_k}(\lat^{k+1}) \|^2]\\
		\leq& F(\para_\iter)-\pas_\iter\|\hat{I}(\para_\iter)^{-1/2}\nabla_\para F(\para_\iter)\|^2\\
		&+\pas_\iter^2\dfrac{L}{2}\expectation_{k}[\|\estFIM{\para_\iter}^{-1}H_{\para_k}(\lat^{k+1}) \|^2]
	\end{align*}
	Under Assumption~\ref{ass:nonconvex}.\ref{ass:C} and~\ref{ass:eigen}, taking the total expectation and the sum for $l$ between 0 and $\iter$, we obtain 
	\[
	\mu_{m}\expectation\left(\sum_{l=0}^\iter\pas_l\|\nabla_\para F(\para_l)\|^2\right)\leq F(\para_0)-\expectation(F(\para_{\iter+1}))+\mu_{M}^2\dfrac{CL}{2}\sum_{l=0}^\iter\pas_l^2.
	\]
	We finally obtain the result by noticing that for all $0 \leq l \leq k$ $\|\nabla F(\para_l)\|^2\geq\min_{0\leq l'\leq \iter}\|\nabla F(\para_{l'})\|^2$ for all $l$ and $\expectation[F(\para_{k+1})]\geq \min F$. 
\end{proof}

\subsection{Second setting: realizations of latent are generated from a  transition kernel from an ergodic  Markov chain having the posterior distribution as stationary distribution}

\begin{proof}[Proof of Theorem~\ref{th:prop_kernel}]
	
	We consider now the setting where at iteration $k$ the realization $\lat^k$ is sampled from a transition kernel of a Markov chain having the posterior distribution as stationary distribution. To state the convergence proof in this setting let us 
	introduce for all integer $k$ the $\sigma-$algebra $\mathcal{F}_k=\sigma(\para_0, \lat_l, 0 \leq l \leq k  )$. 
	
	In this case the expectation $\expectation(H_{\para_k}(\lat^{k+1})  | \mathcal{F}_k )$
	is not equal to $ \nabla_\para F(\para_k)$, leading to the presence of a supplementary  term due to the use of a MCMC in the simulation task. The proof begins the same way as in the previous case.
	
	By Taylor-Lagrange inequality and under Assumption~\ref{ass:nonconvex}.\ref{ass:L}, we get for all $k$
	\begin{align*}
		F(\para_{k+1})\leq& F(\para_k)+\langle\nabla_\para F(\para_k),\para_{k+1}-\para_k\rangle+\dfrac{L}{2}\|\para_{k+1}-\para_k\|^2\\
		\leq& F(\para_k)+\pas_k\langle\nabla_\para F(\para_k),\estFIM{\para_\iter}^{-1}\nabla_\para\log \comp(\obs,\lat^{k+1};\para_k)\rangle\\
		&+\pas_k^2\dfrac{L}{2}\|\estFIM{\para_\iter}^{-1}\nabla_\para\log\comp(\obs,\lat^{k+1};\para_k)\|^2
	\end{align*}
	
	Taking first the expectation  conditionally to the $\sigma-$algebra $\mathcal{F}_k$ and introducing $ \nabla_\para F(\para_k)$, we get:
	
	\begin{align*}
		\expectation(F(\para_{k+1})| \mathcal{F}_k) \leq& F(\para_k)-\pas_k \langle\nabla_\para F(\para_k),
		\estFIM{\para_\iter}^{-1}\nabla_\para F(\para_k) \rangle \\
		&+\pas_k \langle \nabla_\para F(\para_k),
		\estFIM{\para_\iter}^{-1}(\nabla_\para F(\para_k)-\expectation(H_{\para_k}(\lat^{k+1})  | \mathcal{F}_k ) )\rangle \\
		&+\pas_k^2\dfrac{L}{2}\expectation (\|\estFIM{\para_\iter}^{-1}H_{\para_k}(\lat^{k+1}) \|^2 | \mathcal{F}_k)
	\end{align*}
	
	The difficulty here is to control the additional term $B_k= \langle \nabla_\para F(\para_k),
	\estFIM{\para_\iter}^{-1}(\nabla_\para F(\para_k)-\expectation(H_{\para_k}(\lat^{k+1})  | \mathcal{F}_k ) )\rangle $ in the second line raised up by the MCMC procedure used for the simulation of $\lat^k$.
	
	Let us introduce the notation $\eta_k= H_{\para_k}(\lat^{k+1}) - \nabla_\para F(\para_k) $.
	
	Taking full expectation of the previous inequality, we get
	\begin{align*}
		\expectation(F(\para_{k+1})) \leq& \expectation(F(\para_k))-\pas_k  \expectation(\langle \nabla_\para F(\para_k),
		\estFIM{\para_\iter}^{-1}\nabla_\para F(\para_k) \rangle) \\
		&-\pas_k \expectation( \langle \nabla_\para F(\para_k),
		\estFIM{\para_\iter}^{-1} \expectation (\eta_k  | \mathcal{F}_k ) \rangle) \\
		&+\pas_k^2\dfrac{L}{2}\expectation (\|\estFIM{\para_\iter}^{-1}H_{\para_k}(\lat^{k+1}) \|^2 )
	\end{align*}
	Therefore reordering the terms we get
	\begin{align*}
		\pas_k  \expectation(\langle \nabla_\para F(\para_k),
		\estFIM{\para_\iter}^{-1}\nabla_\para F(\para_k) \rangle) \leq&
		\expectation(F(\para_k)) -\expectation(F(\para_{k+1})) \\
		&-\pas_k \expectation( \langle \nabla_\para F(\para_k),
		\estFIM{\para_\iter}^{-1} \expectation( \eta_k  | \mathcal{F}_k ) \rangle )\\
		&+\pas_k^2\dfrac{L}{2}\expectation (\|\estFIM{\para_\iter}^{-1}H_{\para_k}(\lat^{k+1}) \|^2 )
	\end{align*}

	Under Assumption~\ref{ass:nonconvex}.\ref{ass:C} and~\ref{ass:nonconvex}.\ref{ass:eigen},  suming for $l$ between 0 and $\iter$, we obtain 
	\begin{align*}
		\mu_{m}\expectation\left(\sum_{l=0}^\iter\pas_l\|\nabla_\para F(\para_l)\|^2\right)&\leq F(\para_0)-\expectation(F(\para_{\iter+1}))+\mu_{M}^2\dfrac{CL}{2}\sum_{l=0}^\iter\pas_l^2\\
		&- \expectation (\sum_{l=0}^\iter\pas_l \langle \nabla_\para F(\para_l),\estFIM{\para_l}^{-1} \expectation( \eta_l  | \mathcal{F}_l )  \rangle   )
	\end{align*}

	To control the last term, we apply the result of 
	Lemma~\ref{lem:serie} below which proves that $\sum \pas_k \langle \nabla_\para F(\para_k),
	\estFIM{\para_\iter}^{-1}  \eta_k   \rangle$ converges almost surely. 
	Therefore we get:
	\begin{align*}
		\mu_{m}\expectation\left(\sum_{l=0}^\iter\pas_l\|\nabla_\para F(\para_l)\|^2\right)&\leq F(\para_0)-\expectation(F(\para_{\iter+1}))+\mu_{M}^2\dfrac{CL}{2}\sum_{l=0}^\iter\pas_l^2 +C
	\end{align*}
	
	Finally we get the result by dividing the inequality  by $\sum_{l=0}^\iter\pas_l$.
	
\end{proof}

Before  stating  Lemma~\ref{lem:serie}, we first establish several preliminary results needed for the proof. In particular we  first introduce the Poisson equation  as done for example in \citet{allassonniere2015,atchade2017perturbed} and establish several technical lemmas derived below. Recall $H_\para(\lat)=-\nabla_\para \log f(\obs,\lat;\para)$ and $\eta_k= H_{\para_k}(\lat^{k+1}) - \nabla_\para F(\para_k) $.

\begin{lemma}
	\label{lem:poisson}
	Assume~\ref{ass:MCMC}. Then there exists a measurable function $(\para,z)\rightarrow \hat{H}_\para(z)$ such that $\sup_\para |\hat{H}_\para |_W<\infty$ and for any $(\para,z)\in \paraspace \times \latspace$, 
	\begin{equation}
		\label{eq:poisson}
		\hat{H}_\para(z) -\kernel_\para \hat{H}_\para(z) = H_\para(z) - \int H_\para(z) \post_\para(z) dz 
	\end{equation}
	Moreover there exists a constant $C$ such that for any $(\para,\para')\in \paraspace^2$,
	$$
	\|\kernel_\para \hat{H}_\para(z)-\kernel_{\para'} \hat{H}_{\para'}(z)\|_W \leq C \|\para-\para'\|
	$$
\end{lemma}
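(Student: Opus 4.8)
The plan is to construct $\hat H_\para$ explicitly as the potential (resolvent) series associated with the centered function $\bar H_\para := H_\para - \post_\para(H_\para)$, where $\post_\para(H_\para) = \int H_\para(z)\,\post_\para(z)\,dz$. First I would set
$$\hat H_\para(z) = \sum_{m=0}^{\infty}\p{\kernel_\para^m H_\para(z) - \post_\para(H_\para)} = \sum_{m=0}^{\infty}\kernel_\para^m\bar H_\para(z),$$
and show the series converges. Fixing $\nu = 1$ (admissible since $p\geq 2$), Assumption~\ref{ass:MCMC}.\ref{ass:ergo} gives $\abs{\kernel_\para^m\bar H_\para(z)} = \abs{\int\bar H_\para\,d(\kernel_\para^m(z,\cdot)-\post_\para)} \leq C\rho^m W(z)\,\abs{\bar H_\para}_W$ (using $\post_\para(\bar H_\para)=0$), while $\sup_\para\abs{\bar H_\para}_W<\infty$ follows from Assumption~\ref{ass:MCMC}.\ref{ass:drift} together with the uniform bound $\sup_\para\post_\para(W)<\infty$ implied by the drift condition. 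Summing the geometric bound yields $\sup_\para\abs{\hat H_\para}_W<\infty$. The Poisson identity \eqref{eq:poisson} then follows by shifting the summation index and using invariance $\post_\para\kernel_\para=\post_\para$, which gives $\kernel_\para\hat H_\para = \sum_{m\geq1}\kernel_\para^m\bar H_\para = \hat H_\para - \bar H_\para$; the interchange of $\kernel_\para$ with the sum is justified by the geometric domination.

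For the Lipschitz estimate I would first reduce it to the regularity of $\para\mapsto\hat H_\para$. The Poisson equation gives $\kernel_\para\hat H_\para = \hat H_\para - \bar H_\para$, so
$$\kernel_\para\hat H_\para - \kernel_{\para'}\hat H_{\para'} = \p{\hat H_\para - \hat H_{\para'}} - \p{\bar H_\para - \bar H_{\para'}},$$
and $\abs{\bar H_\para-\bar H_{\para'}}_W\leq C\norm{\para-\para'}$ by Assumption~\ref{ass:MCMC}.\ref{ass:lipschitz} (first two lines) combined with $\sup_\para\post_\para(W)<\infty$. It therefore suffices to prove $\abs{\hat H_\para-\hat H_{\para'}}_W\leq C\norm{\para-\para'}$.

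The core of the argument, and the step I expect to be the main obstacle, is to control $\hat H_\para-\hat H_{\para'} = \sum_m(\kernel_\para^m\bar H_\para - \kernel_{\para'}^m\bar H_{\para'})$ without producing non-summable constant terms. The device I would use is the centered operator $P_\para := \kernel_\para - \post_\para$ (acting by $P_\para f = \kernel_\para f - \post_\para(f)$), which satisfies the crucial algebraic identity $P_\para^m = \kernel_\para^m - \post_\para$ for all $m\geq1$; consequently $\abs{P_\para^m g(z)}\leq C\rho^m W(z)\abs{g}_W$ for \emph{every} $W$-bounded $g$, centered or not. Writing $\kernel_\para^m\bar H_\para = P_\para^m\bar H_\para$ and splitting
$$\kernel_\para^m\bar H_\para - \kernel_{\para'}^m\bar H_{\para'} = P_\para^m\p{\bar H_\para - \bar H_{\para'}} + \p{P_\para^m - P_{\para'}^m}\bar H_{\para'},$$
the first summand is bounded by $C\rho^m W(z)\abs{\bar H_\para-\bar H_{\para'}}_W\leq C\rho^m W(z)\norm{\para-\para'}$, hence summable. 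For the second I would use the telescoping identity $P_\para^m - P_{\para'}^m = \sum_{j=0}^{m-1}P_\para^{m-1-j}\p{P_\para-P_{\para'}}P_{\para'}^j$: the inner factor $P_{\para'}^j\bar H_{\para'}$ decays like $\rho^j$, the middle factor $\p{P_\para-P_{\para'}}=\p{\kernel_\para-\kernel_{\para'}}-\p{\post_\para-\post_{\para'}}$ contributes an $O(\norm{\para-\para'})$ factor by Assumption~\ref{ass:MCMC}.\ref{ass:lipschitz} (kernel line and posterior line), and the outer $P_\para^{m-1-j}$ contracts like $\rho^{m-1-j}$. Each of the $m$ telescoping terms is thus bounded by $C\rho^{m-1}\norm{\para-\para'}W(z)$, so the $m$-th term of the series is $O(m\rho^{m-1}\norm{\para-\para'}W(z))$, and the geometric factor defeats the linear growth since $\sum_m m\rho^{m-1}=(1-\rho)^{-2}<\infty$. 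This yields $\abs{\hat H_\para-\hat H_{\para'}}_W\leq C\norm{\para-\para'}$ and completes the proof. Two bookkeeping points require care: the choice $\nu=1<p$ is what makes the $W$-norm (rather than a $W^\nu$ norm) appear in the final bound and lets Assumption~\ref{ass:MCMC}.\ref{ass:lipschitz}, stated in $\abs\cdot_W$, apply to the intermediate functions; and the uniform moment control $\kernel_\para^j W\leq CW$, obtained from the drift condition via Jensen ($\kernel_\para^j W\leq(\kernel_\para^j W^p)^{1/p}$), is needed to keep all intermediate $W$-norms finite along the compositions.
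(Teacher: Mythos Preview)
The paper does not give its own proof of this lemma: it simply cites Lemma~4.2 of \citet{fort2011}. Your explicit construction of $\hat H_\para$ as the resolvent series $\sum_{m\ge 0}\kernel_\para^m\bar H_\para$, together with the centered-kernel device $P_\para=\kernel_\para-\post_\para$ (so that $P_\para^m=\kernel_\para^m-\post_\para$ contracts like $\rho^m$ on all $W$-bounded functions) and the telescoping decomposition $P_\para^m-P_{\para'}^m=\sum_{j=0}^{m-1}P_\para^{m-1-j}(P_\para-P_{\para'})P_{\para'}^j$, is exactly the standard argument behind that reference (and closely related results in Andrieu--Moulines type analyses). So rather than departing from the paper's approach, you have essentially reproduced the proof that the paper outsources; the argument is correct and the two bookkeeping points you flag (choosing $\nu=1<p$ so that Assumption~\ref{ass:MCMC}.\ref{ass:lipschitz} interfaces with the ergodic bound, and the uniform control $\kernel_\para^j W\le CW$ from the drift via Jensen) are indeed the right technical ingredients.
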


\begin{proof}[Proof of Lemma~\ref{lem:poisson}]
	The proof is established in details in Lemma 4.2 of \cite{fort2011}.
\end{proof}

\begin{lemma}
	\label{lem:controlw}
	Under Assumptions~\ref{ass:MCMC}.\ref{ass:kernel},\ref{ass:MCMC}.\ref{ass:drift}, we have  
	$\sup_k \expectation (W^p(Z^k)) <\infty$.
\end{lemma}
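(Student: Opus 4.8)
The plan is to run a one-step Foster--Lyapunov (drift) argument on the scalar sequence $u_\iter=\expectation\p{W^p(\lat^\iter)}$. The first step is to condition on the filtration $\mathcal{F}_\iter$ defined in the proof of Theorem~\ref{th:prop_kernel}, namely $\mathcal{F}_\iter=\sigma(\para_0,\lat^l,\,0\leq l\leq \iter)$. Since $\para_\iter$ is a deterministic function of $\para_0,\lat^1,\dots,\lat^\iter$ and is therefore $\mathcal{F}_\iter$-measurable, while conditionally on $\mathcal{F}_\iter$ the realization $\lat^{\iter+1}$ is drawn from $\kernel_{\para_\iter}(\lat^\iter,\cdot)$, the Markov property yields
\[
\expectation\p{W^p(\lat^{\iter+1})\cond \mathcal{F}_\iter}=\p{\kernel_{\para_\iter}W^p}(\lat^\iter).
\]

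Next I would invoke Assumption~\ref{ass:MCMC}.\ref{ass:drift}, whose crucial feature is that the drift inequality $\kernel_\para W^p\leq \lambda W^p+b$ holds \emph{uniformly} over $\para\in\paraspace$. This uniformity is precisely what lets me handle the fact that the parameter $\para_\iter$ driving the kernel is itself random and evolves jointly with the chain: applying the bound pointwise with $\para=\para_\iter$ gives $\p{\kernel_{\para_\iter}W^p}(\lat^\iter)\leq \lambda W^p(\lat^\iter)+b$, hence
\[
\expectation\p{W^p(\lat^{\iter+1})\cond \mathcal{F}_\iter}\leq \lambda W^p(\lat^\iter)+b.
\]
Taking total expectation produces the linear recursion $u_{\iter+1}\leq \lambda u_\iter+b$.

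Solving this recursion by induction gives, for every $\iter$,
\[
u_\iter\leq \lambda^\iter u_0 + b\sum_{j=0}^{\iter-1}\lambda^j\leq u_0+\frac{b}{1-\lambda},
\]
where the geometric sum is controlled because $0<\lambda<1$. As the initial latent value $z_0$ is a fixed input of the algorithm, the initial term $u_0=W^p(\lat^0)$ is finite, and therefore $\sup_\iter u_\iter\leq u_0+\tfrac{b}{1-\lambda}<\infty$, which is the claim. I do not anticipate a genuine obstacle here: this is the standard geometric drift bound, and the only point requiring care is the measurability bookkeeping that permits replacing the fixed $\para$ in the drift condition by the random, $\mathcal{F}_\iter$-measurable $\para_\iter$ --- which is exactly the reason the supremum over $\para\in\paraspace$ is imposed in Assumption~\ref{ass:MCMC}.\ref{ass:drift}.
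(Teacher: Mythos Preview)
Your proof is correct and follows exactly the same route as the paper: condition on $\mathcal{F}_\iter$, apply the uniform drift inequality $\kernel_\para W^p\le\lambda W^p+b$ with $\para=\para_\iter$, take expectations to obtain $u_{\iter+1}\le\lambda u_\iter+b$, and conclude by induction. You simply spell out the induction step and the measurability bookkeeping more explicitly than the paper does.
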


\begin{proof}[Proof of Lemma~\ref{lem:controlw}]
	Since $Z_{k+1}$ is generated from the transition kernel $\kernel_{\para_k}(\cdot| Z^k, y)$, we get :
	$$\expectation (W^p(Z^{k+1}))=\expectation (\expectation (W^p(Z^{k+1})| \mathcal{F}_k  ))=  \expectation ( \kernel_{\para_k} W^p(Z^k) )
	$$
	Applying the drift inequality of Assumption~\ref{ass:MCMC}.\ref{ass:drift}, we get 
	$$ \expectation (W^p(Z^{k+1})) \leq  \lambda \expectation (W^p(Z^{k})) +b $$ 
	The result is then obtained by induction.
\end{proof}

\begin{lemma}
	\label{lem:serie}
	Assume Assumptions~\ref{ass:MCMC}.\ref{ass:kernel}, \ref{ass:MCMC}.\ref{ass:drift}, \ref{ass:MCMC}.\ref{ass:ergo}, \ref{ass:stepsizeadd} and $\paraspace$ is bounded.  Then 
	$\sum \pas_k \langle \nabla_\para F(\para_k),
	\estFIM{\para_\iter}^{-1} \eta_k  \rangle$ converges almost surely.
\end{lemma}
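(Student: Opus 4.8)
The plan is to prove that the series $\sum_k \pas_k \langle \nabla_\para F(\para_k), \estFIM{\para_k}^{-1} \eta_k \rangle$ converges almost surely by using the Poisson equation from Lemma~\ref{lem:poisson} to perform an Abel-type summation, decomposing the partial sums into a convergent martingale part and several remainder terms that are controlled by the step-size assumptions and the uniform ergodicity. This is the standard stochastic approximation technique for handling Markovian (non-i.i.d.) noise, following \citet{fort2011} and \citet{allassonniere2015}.

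First I would denote $G_k = \estFIM{\para_k}^{-1}\nabla_\para F(\para_k)$ for brevity and recall from Lemma~\ref{lem:poisson} that there is a solution $\hat H_\para$ to the Poisson equation $\hat H_\para - \kernel_\para \hat H_\para = H_\para - \int H_\para \,d\post_\para$, with $\sup_\para |\hat H_\para|_W < \infty$. Since $\int H_\para \,d\post_\para = \nabla_\para F(\para)$, the noise term satisfies $\eta_k = H_{\para_k}(\lat^{k+1}) - \nabla_\para F(\para_k) = \hat H_{\para_k}(\lat^{k+1}) - \kernel_{\para_k}\hat H_{\para_k}(\lat^{k+1})$. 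The key algebraic step is then to insert and reorganize so that the telescoping structure appears: I would write $\hat H_{\para_k}(\lat^{k+1}) - \kernel_{\para_k}\hat H_{\para_k}(\lat^{k+1}) = \big(\hat H_{\para_k}(\lat^{k+1}) - \kernel_{\para_k}\hat H_{\para_k}(\lat^{k})\big) + \big(\kernel_{\para_k}\hat H_{\para_k}(\lat^{k}) - \kernel_{\para_k}\hat H_{\para_k}(\lat^{k+1})\big)$, and recognize that the first bracket is a martingale-difference increment with respect to $\mathcal{F}_k$ (by Assumption~\ref{ass:MCMC}.\ref{ass:kernel}), while the second bracket can be made to telescope once paired against $\pas_k G_k$.

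Accordingly I would split $\sum_{k} \pas_k \langle G_k, \eta_k\rangle$ into four pieces. The first is a martingale series $\sum_k \pas_k \langle G_k, \hat H_{\para_k}(\lat^{k+1}) - \kernel_{\para_k}\hat H_{\para_k}(\lat^{k})\rangle$, which converges a.s.\ by the martingale convergence theorem: its increments are square-summable in expectation because $\sum_k \pas_k^2 < \infty$ (Assumption~\ref{ass:model}.\ref{ass:stepsize}), the factors $G_k$ are bounded since $\paraspace$ is bounded together with Assumption~\ref{ass:nonconvex}.\ref{ass:eigen}, and $\expectation[W^p(\lat^k)]$ is uniformly bounded by Lemma~\ref{lem:controlw}, giving uniform $L^2$ control on $\hat H$. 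The remaining three pieces come from the telescoping term: a genuine telescoping sum $\sum_k \pas_k\langle G_k, \kernel_{\para_k}\hat H_{\para_k}(\lat^k) - \kernel_{\para_{k-1}}\hat H_{\para_{k-1}}(\lat^k)\rangle$, which is bounded using the Lipschitz estimate $\|\kernel_\para\hat H_\para - \kernel_{\para'}\hat H_{\para'}\|_W \le C\|\para-\para'\|$ from Lemma~\ref{lem:poisson} together with $\|\para_k - \para_{k-1}\| = O(\pas_k)$; a boundary/telescoped term handled by $\sum_k |\pas_{k+1}-\pas_k| < \infty$ (Assumption~\ref{ass:stepsizeadd}); and a leftover term again of order $\pas_k^2$. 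Each is summable in $L^1$ (hence a.s.\ absolutely convergent) after taking expectations and invoking Lemma~\ref{lem:controlw} for the moment bounds.

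\textbf{The main obstacle} I expect is the bookkeeping of the Abel summation: correctly pairing the kernel terms so the telescoping is exact, and verifying that every residual term carries either a factor $\pas_k^2$ or a factor $|\pas_{k+1}-\pas_k|$ (or $\|\para_k-\para_{k-1}\|\pas_k = O(\pas_k^2)$) so that summability follows from Assumptions~\ref{ass:model}.\ref{ass:stepsize} and~\ref{ass:stepsizeadd}. The crucial quantitative inputs are the uniform-in-$\para$ Lipschitz regularity of $\kernel_\para \hat H_\para$ from Lemma~\ref{lem:poisson}, the uniform moment bound $\sup_k \expectation[W^p(\lat^k)] < \infty$ from Lemma~\ref{lem:controlw} used via the Cauchy--Schwarz inequality in the $|\cdot|_W$ norm, and the boundedness of $\paraspace$ which keeps $\nabla_\para F(\para_k)$ and $\estFIM{\para_k}^{-1}$ uniformly bounded. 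Everything else reduces to the martingale convergence theorem and routine summation of a geometric/polynomial nature.
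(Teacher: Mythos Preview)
Your proposal is correct and follows essentially the same route as the paper: the paper writes $\eta_k=M_k+R_k+R_k'$ with $M_k=\hat H_{\para_k}(\lat^{k+1})-\kernel_{\para_k}\hat H_{\para_k}(\lat^{k})$ a martingale increment, $R_k=\kernel_{\para_k}\hat H_{\para_k}(\lat^{k})-\kernel_{\para_{k+1}}\hat H_{\para_{k+1}}(\lat^{k+1})$ treated by Abel summation (this is where $\sum|\pas_{k+1}-\pas_k|<\infty$ enters), and $R_k'=\kernel_{\para_{k+1}}\hat H_{\para_{k+1}}(\lat^{k+1})-\kernel_{\para_k}\hat H_{\para_k}(\lat^{k+1})$ controlled by the Lipschitz bound of Lemma~\ref{lem:poisson} together with $\|\para_{k+1}-\para_k\|=O(\pas_{k+1})$, each piece being closed with the moment bound of Lemma~\ref{lem:controlw}. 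Aside from minor relabeling (you group the pieces as ``four'' where the paper names three and then Abel-transforms one of them), the decomposition, the lemmas invoked, and the summability arguments are the same.
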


\begin{proof}[Proof of Lemma~\ref{lem:serie}]
	Applying Lemma~\ref{lem:poisson}, we get that there exist a function $\hat{H}_{\para_k}$ satisfying equation (\ref{eq:poisson}). Therefore we get $\eta_k=\hat{H}_{\para_k}(Z^{k+1})-   \kernel_{\para_k}\hat{H}_{\para_k}(Z^{k+1}) $.
	Let us denote by $M_k=\hat{H}_{\para_k}(Z^{k+1})-\kernel_{\para_k}\hat{H}_{\para_k}(Z^{k})$, $R_k=\kernel_{\para_k}\hat{H}_{\para_k}(Z^{k})- \kernel_{\para_{k+1}}\hat{H}_{\para_{k+1}}(Z^{k+1})$, 
	and $R_k'=\kernel_{\para_{k+1}}\hat{H}_{\para_{k+1}}(Z^{k+1})  -\kernel_{\para_k}\hat{H}_{\para_k}(Z^{k+1}) $ such that $\eta_k=M_k+R_k+R_k'$. We will prove successively that  the three sums $\expectation (\sum \pas_k \| \langle \nabla_\para F(\para_k),
	\estFIM{\para_\iter}^{-1} M_k \rangle \|)$ , $\expectation (\sum \pas_k \| \langle \nabla_\para F(\para_k),
	\estFIM{\para_\iter}^{-1} R_k \rangle \|)$ , $\expectation (\sum \pas_k \| \langle \nabla_\para F(\para_k),
	\estFIM{\para_\iter}^{-1} R_k' \rangle \|)$ are finite with probability one.
	
	Let us first note that the term $\pas_k \langle \nabla_\para F(\para_k),
	\estFIM{\para_\iter}^{-1} M_k \rangle$  is a martingale increment with respect to the filtration $(\mathcal{F}_k)$. By Lemma~\ref{lem:poisson} and under Assumption~\ref{ass:MCMC}.\ref{ass:drift}, we get that there exists $C$ such that with probability one for all $k$
	$\|M_k \| \leq C(W(Z^{k+1})+W(Z^k))$.
	Applying classical result on martingales \cite{hall2014}, we get that $\sum \pas_k \langle \nabla_\para F(\para_k),
	\estFIM{\para_\iter}^{-1} M_k \rangle$ converges almost surely if $\sum \pas_k^2 \| \estFIM{\para_\iter}^{-1} \nabla_\para F(\para_k)\|^2 \|M_k\|^2<\infty$ almost surely. By Assumption~\ref{ass:model}, $\| \estFIM{\para_\iter}^{-1}\nabla_\para F(\para_k)\|^2\leq C\mu_M^2 $ and  $\|M_k \|^2 \leq 2C(W^2(Z^{k+1})+W^2(Z^k))$ leading to 
	$\expectation (\sum \pas_k^2 \| \estFIM{\para_\iter}^{-1} \nabla_\para F(\para_k)\|^2 \|M_k\|^2)<\infty$ which gives the control of the second sum.

	Concerning the second term, applying the Abel transformation leads to 
	\begin{align*}
		&\sum_{k=0}^K \pas_k \langle \nabla_\para F(\para_k),
		\estFIM{\para_\iter}^{-1} R_k \rangle=\sum_{k=1}^K  \langle \pas_{k}
		\estFIM{\para_\iter}^{-1}\nabla_\para F(\para_k) - \pas_{k-1}
		\estFIM{\para_{\iter-1}}^{-1}\nabla_\para F(\para_{k-1}), \kernel_{\para_k} \hat{H}_{\para_k} (Z^k)  \rangle \\&+ 
		\pas_0 \langle \estFIM{\para_0}^{-1} \nabla_\para F(\para_0), \kernel_{\para_0} \hat{H}_{\para_0} (Z^0) 
		\rangle - 
		\pas_K \langle \estFIM{\para_K}^{-1} \nabla_\para F(\para_K), \kernel_{\para_{K+1}} \hat{H}_{\para_{K+1}} (Z^{K+1}) 
		\rangle
	\end{align*}
	
	Let us denote by $\xi_k=\pas_{k}
	\estFIM{\para_\iter}^{-1}\nabla_\para F(\para_k) - \pas_{k-1}
	\estFIM{\para_{\iter-1}}^{-1}\nabla_\para F(\para_{k-1})$. 
	Therefore we get
	\begin{align*}
		\|\xi_k \| &\leq |\pas_{k}-\pas_{k-1}| \|\estFIM{\para_\iter}^{-1}\nabla_\para F(\para_k)\|
		+ \pas_{k-1}\|\estFIM{\para_\iter}^{-1}\nabla_\para F(\para_k) -\estFIM{\para_\iter}^{-1}\nabla_\para F(\para_{k-1})  \| \\
		&+\pas_{k-1}\|\estFIM{\para_\iter}^{-1}\nabla_\para F(\para_{k-1}) -\estFIM{\para_{\iter-1}}^{-1}\nabla_\para F(\para_{k-1})  \|\\
	\end{align*}
	Under Assumption~\ref{ass:model}, there exists $M>0$ such that for all $k$ $\|\estFIM{\para_\iter}^{-1}- \estFIM{\para_{\iter-1}}^{-1}\| \leq M \|
	\para_\iter-\para_{k-1}\| $ therefore we get
	\begin{align*}
		\|\xi_k \| &\leq |\pas_{k}-\pas_{k-1}| \|\estFIM{\para_\iter}^{-1}\nabla_\para F(\para_k)\|
		+ \pas_{k-1} \mu_M \|\nabla_\para F(\para_k) -\nabla_\para F(\para_{k-1}) \| \\
		&+\pas_{k-1} C M \|
		\para_\iter-\para_{k-1}\|\\
		&\leq |\pas_{k}-\pas_{k-1}| C \mu_M + \pas_{k-1} \mu_M L \|
		\para_\iter-\para_{k-1}\| + \pas_{k-1} C M \|
		\para_\iter-\para_{k-1}\| 
	\end{align*}
	Taking the expectation, we get 
	\begin{align*}
		\expectation(\|\xi_k \|) \leq |\pas_{k}-\pas_{k-1}| C \mu_M + \pas^2_{k-1}C (\mu_M L +CM) 
	\end{align*}
	By Lemma~\ref{lem:poisson} and Assumptions~\ref{ass:MCMC}.\ref{ass:drift} and~\ref{ass:MCMC}.\ref{ass:ergo} there exists $C$ such that with probability one for all $k$ 
	$\| \kernel_{\para_k} \hat{H}_{\para_k} (Z^k) \| \leq CW(Z^k)$. Moreover
	by Lemma~\ref{lem:controlw}, we get $\sup_k \expectation (W^2(Z^k))<\infty$ thus  we get   $\sum \expectation (\|  \pas_k \langle \nabla_\para F(\para_k),
	\estFIM{\para_\iter}^{-1} R_k \rangle    \| )<\infty$ which control the second sum.

	Finally let us consider the third sum. By Lemma~\ref{lem:poisson} there exists $C$ such that with probability one for all $k$ 
	\begin{align*}
		\| \langle \nabla_\para F(\para_k), \estFIM{\para_\iter}^{-1} R_k' \rangle \| &\leq C \mu_M \|
		\para_\iter-\para_{k+1}\|   W(Z^{k+1})  \\  
		&\leq C \mu_M^2  \pas_{k+1}  W^2(Z^{k+1}) 
	\end{align*}
	By Lemma~\ref{lem:controlw} we get $\sum \pas_k^2 \expectation (W^2(Z^{k+1}))<\infty$ which implies that 
	$\expectation (\sum \pas_k \| \langle \nabla_\para F(\para_k),
	\estFIM{\para_\iter}^{-1} R_k' \rangle \|)$ is finite almost surely.
	
	This concludes the proof of Lemma~\ref{lem:serie}.
\end{proof}

\FloatBarrier

\clearpage
\section{Reparametrization of models and confidence intervals}\label{sec:app:reparam}

\subsection{Reparametrization}

Models in numerical experiments use a constrained parameter space.

In particular, we have:

\begin{description}
	\item[Logistic growth mixed-effects model:] parameters are $\beta \in \mathbb{R}^*_+ \times \mathbb R$, $\alpha \in \mathbb R^*_+$, $\Gamma \in \mathbb S^2_{++}$, where $\mathbb S^{++}_p$ is the set of symmetric, positive definite matrices of size $p \times p$ and $\sigma \in \mathbb R^*_+$. Then, the original parameter space is $\mathbb R_+^*\times\mathbb R\times\mathbb R_+^*\times \mathbb{S}^{++}_2\times\mathbb R_+^*$ where $\mathsf{S}^{++}_2$ is the set of $2\times2$ positive definite matrix.
	\item[Stochastic Block Model:] parameters are $\alpha\in\mathring{\mathcal S}_{K-1}$ and $p\in\ouv01^{K\times K}$ where $\mathring{\mathcal S}_{K-1}\subset \mathbb R^K$ is the $K-1$ dimentionnal open unit simplex.
\end{description}

Handling directly this parameter space in our algorithm could lead to constraint violations. To handle constraints we propose to use a bijective differentiable mapping from the constrained parameter space to $\mathbb R^d$.

Reparametrization are build using the Parametrization Cookbook \cite{paramcookbook}, and practically using the Python module \texttt{parametrization\_cookbook}.

We introduce the original parameter as a function of $\para\in\paraspace=\mathbb R^d$.

\begin{description}
	\item[Logistic growth mixed-effects model:] $\paraspace=\mathbb R^d$ with $d=7$, and we use $\alpha_\para$, $\beta_\para$, $\Gamma_\para$ and $\sigma_\para$.
	\item[Stochastic Block Model:] $\paraspace=\mathbb R^d$ with $d={K^2+K-1}$, and we use $\alpha_\para$ and $p_\para$.
\end{description}

\subsection{Confidence ellipsoid}

We obtain $\widehat\theta$ an estimate of $\theta_0$ and $\estFIMwhole{\widehat\theta}$ the estimation of the Fisher Information Matrix of the whole sample. Therefore we have asymptotically:
\[
\estFIMwhole{\widehat\theta}^{1/2}\p{\widehat\theta-\theta_0}\xrightarrow[n\to+\infty]{(d)}\mathcal N\p{0,I_d}.
\]

So we compute the asymptotic confidence ellipsoid on $\theta_0$ at confidence level $1-a$:
\[
\braces{\theta\in\mathbb R^d\;\colon\; \p{\theta-\widehat\theta}^T\brackets{\estFIMwhole{\widehat\theta}}\p{\theta-\widehat\theta} \le \chi^2_{d; 1-a}.
}
\]

\subsection{Confidence interval on original parameter}

With $\eta$ an original parameter of the model before reparametrization (see section above for original parameter in logistic growth mixed-effects model or in stockastic block model). After reparametrization this original parameter is a function of $\theta$, noted $\eta_\theta$.

Applying the reparametrization, from $\widehat\theta$, we obtain $\eta_{\widehat\theta}$ an estimate of $\eta_0$.

Applying delta-method \cite{vandervaart2000}, we obtain the asymptotic distribution of $\eta_{\widehat\theta}$:

\[
\frac{\eta_{\widehat\theta}-\eta_0}{\sqrt{g_{\eta_{\widehat\theta}}^T\brackets{\estFIMwhole{\widehat\theta}}^{-1}g_{\eta_{\widehat\theta}}}}
\xrightarrow[n\to+\infty]{(d)}
\mathcal N(0,1),
\]
with
\[
g_{\eta_{\widehat\theta}} = {\left.\frac{\operatorname{d}\eta_\theta}{\operatorname{d}\theta}\right|}_{\theta=\widehat\theta},
\]
and $g_{\eta_{\widehat\theta}}$ is computed with automatic differentiation.

Then we obtain the asymptotic confidence interval on $\eta_0$ at confidence level $(1-a)$:

\[
\brackets{
	\eta_{\widehat\theta}\pm u_{1-a/2}\sqrt{g_{\eta_{\widehat\theta}}^T\brackets{\estFIMwhole{\widehat\theta}}^{-1}g_{\eta_{\widehat\theta}}}
}.
\]

\FloatBarrier

\clearpage
\section{Supplementary numerical results}

\subsection{Logistic growth mixed-effects model}\label{sec:app_logistic_model}

\begin{figure}[ht]
	\centering
	\includegraphics[scale=0.65]{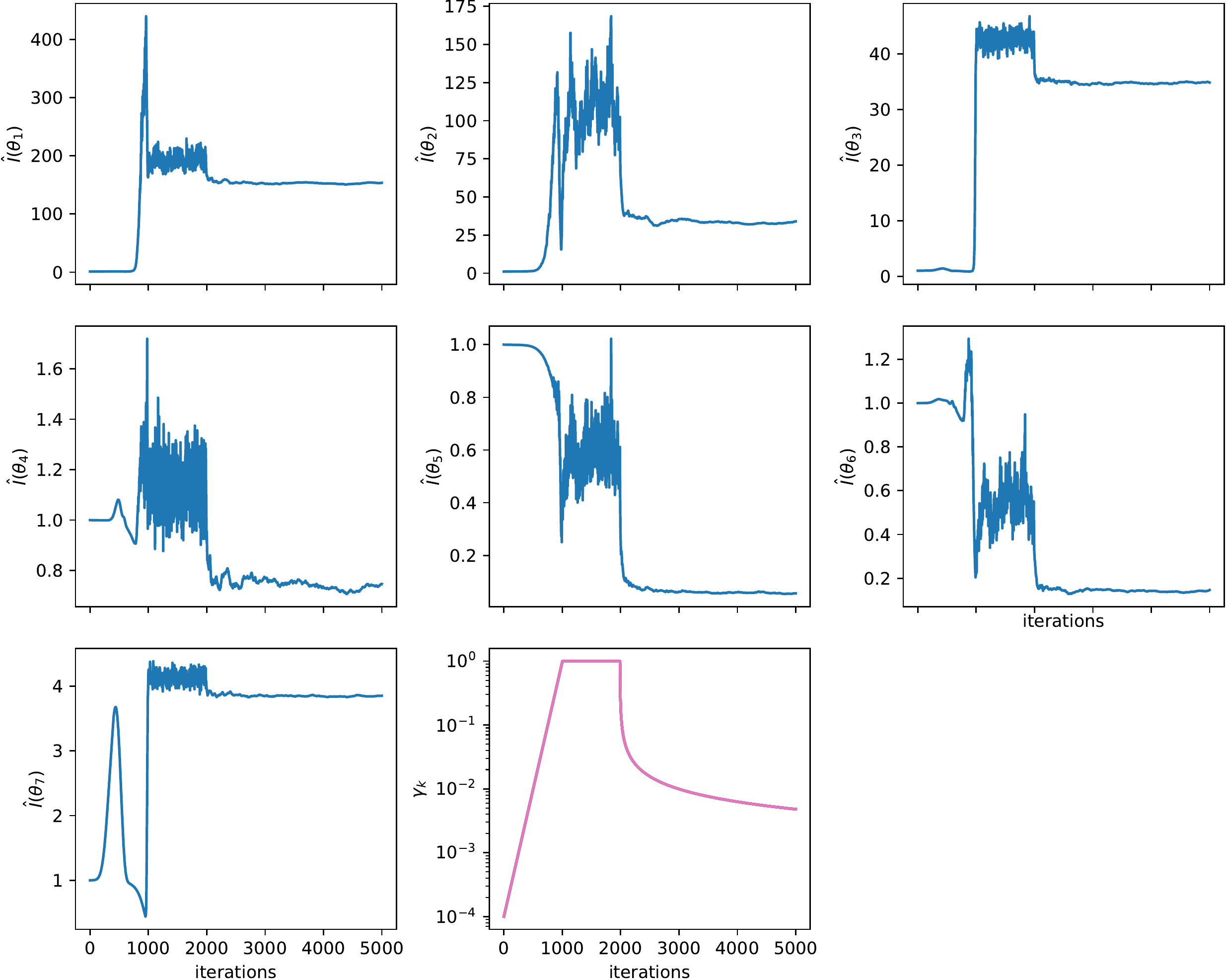}
	\caption{Evolution of the diagonal elements of the FIM estimate across the iterations (in blue), along with the evolution of the learning step (in red).}
	\label{fig:evol_fim}
\end{figure}

\subsection{Real data analysis}

We applied our algorithm to a real dataset from a study on coucal growth rates \citep{Goy16}. In this study, body weights of $\nobs = 259$ birds were collected from their hatching date until they left the nest (see Figure \ref{fig:coucaldata}). The number of measurements per bird ranges from 1 to 9.

\begin{figure}
	\centering
	\includegraphics[scale=0.5]{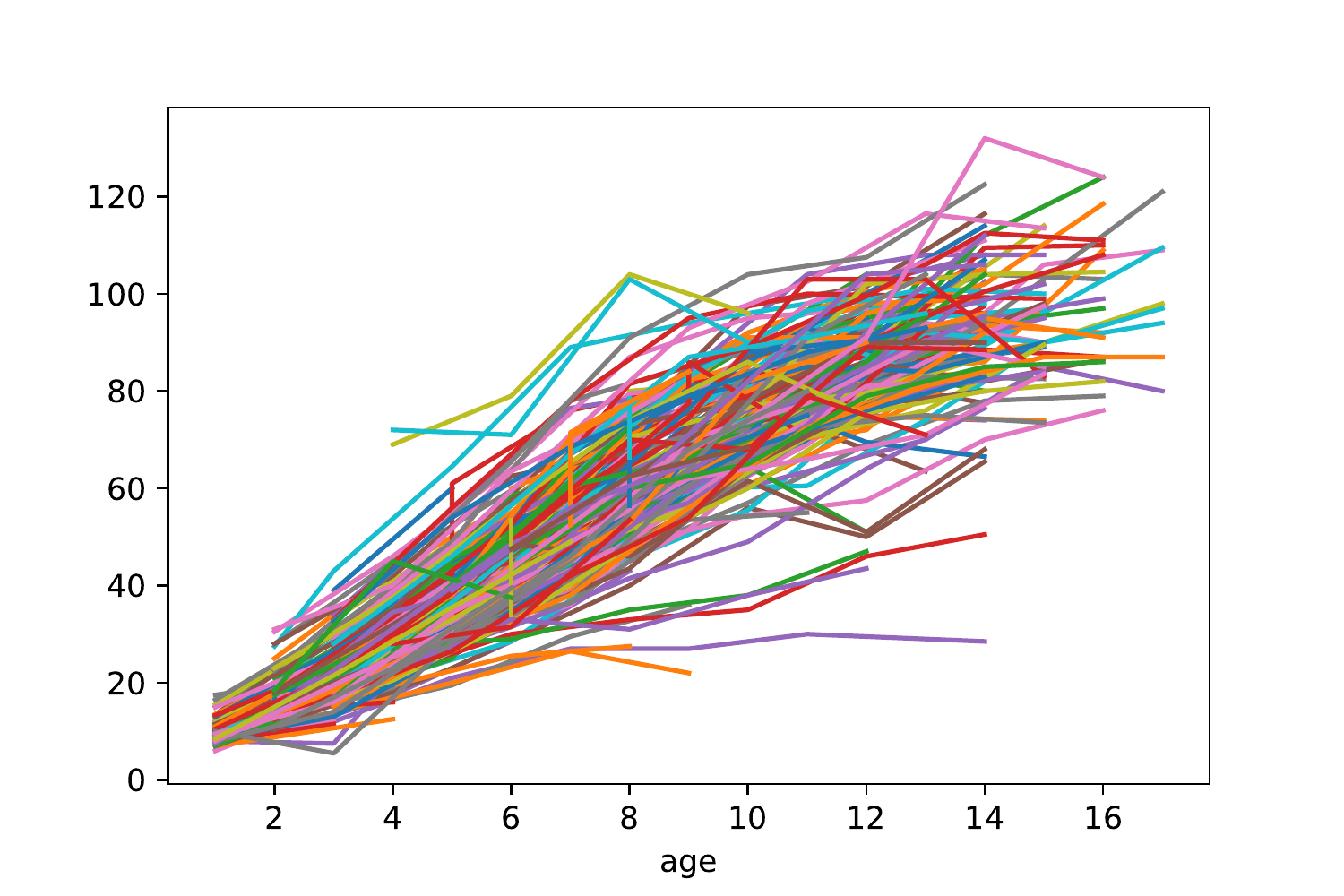}
	\caption{Evolution of body weight (in g) as a function of age (in days from hatching)}
	\label{fig:coucaldata}
\end{figure}

The logistic growth mixed model defined in Section \ref{sec:app_logistic_model} was fitted to the dataset, with the asymptotic weight and the inflexion point (i.e. the age at which bird $i$ reaches half its asymptotic body mass) as random effects. The tuning parameters of the algorithm were set as follows: $K_{pre-heating} = 2000$, $K=10000$, $C_{heating} = 100$, $\alpha =2/3$, $\lambda_0 = 10^{-4}$, and the algorithm was initialized at a random value for $\theta$. Results are given in Figure \ref{fig:evolthetacoucal}. The final estimates were $\hat{\beta}_1 = 97.15$,  $\hat{\beta}_2 = 6.50$, $\hat{\alpha} = 2.80$, $\hat{\Gamma}_{11} = 271.00$, $\hat{\Gamma}_{12} = 7.75$, $\hat{\Gamma}_{22} = 1.10$ and $\hat{\sigma}^2 = 19.80$. Our results are consistent with those provided by the \texttt{semix} package implemented in \texttt{R}, which performs maximum likelihood estimation using the SAEM algorithm for which theoretical guarantees only exist in the exponential family setting. However in our case, due to the presence of a fixed effect, the model does not belong to the curved exponential family as explained in the main body of the paper.

\begin{figure}
	\centering
	\includegraphics[scale=0.65]{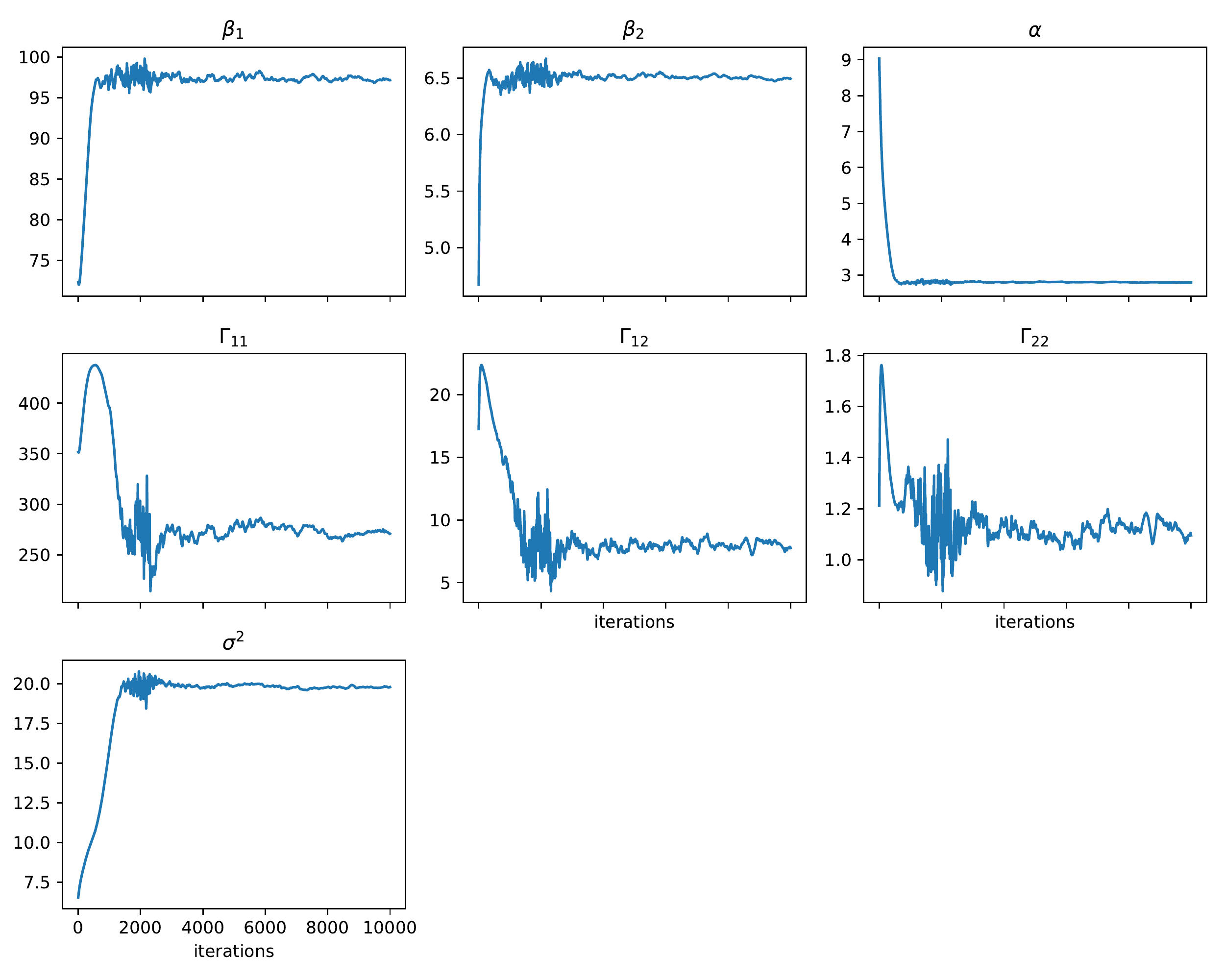}
	\caption{Evolution of the parameter estimates across the iterations on the real dataset.}
	\label{fig:evolthetacoucal}
\end{figure}

\FloatBarrier

\clearpage
\subsection{Stochastic Block Model}\label{sec:app:SBM}
\begin{figure}[ht]
	\centering
	\includegraphics[width=.7\textwidth]{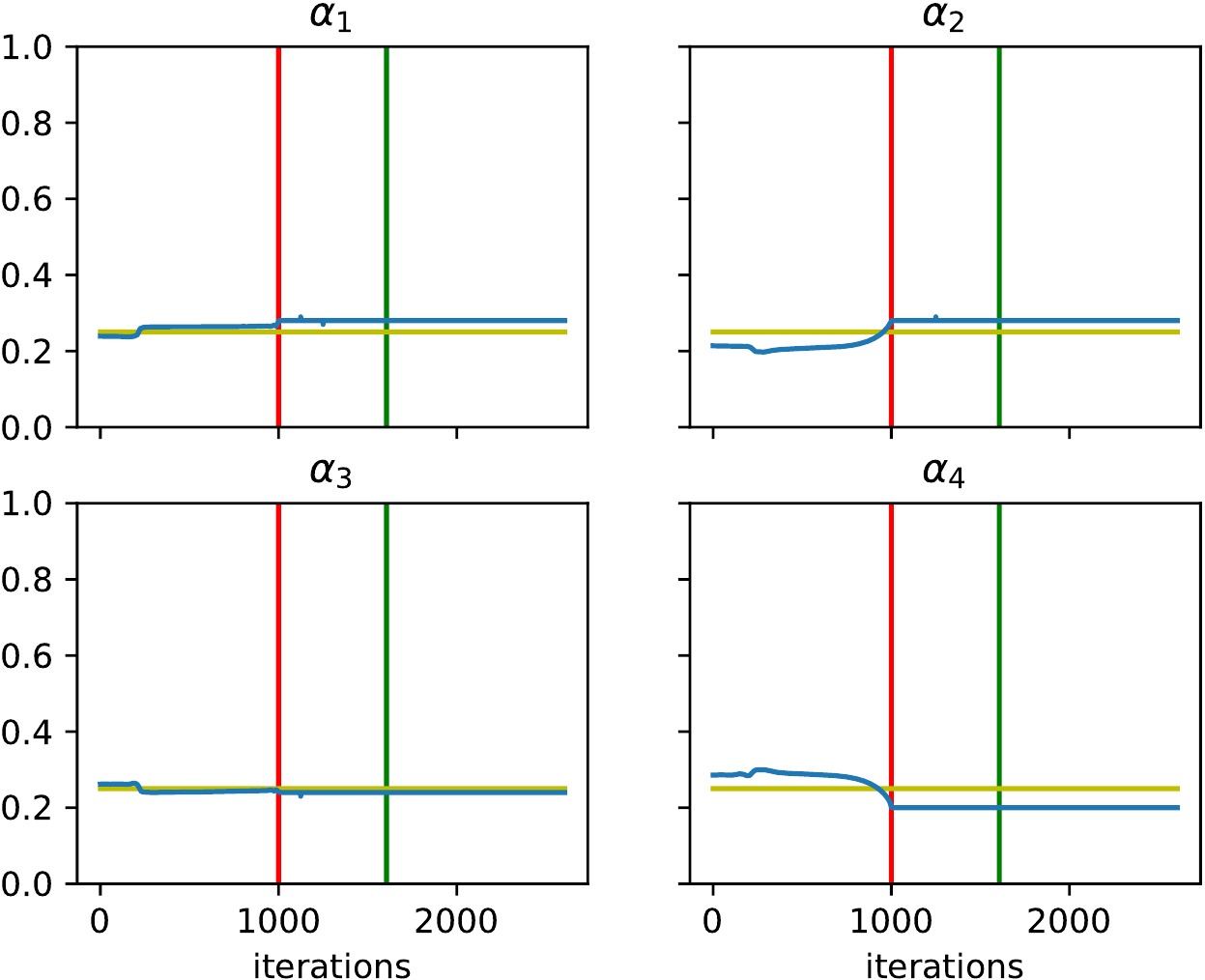}
	\caption{Evolution of the $\alpha$ estimates across the iterations with $\nobs=100$ and $K=4$. Yellow line: simulated value. The red line is the end of the pre-heating, and the green line is the end of the heating.}
	\label{fig:sbm_all_alpha}
\end{figure}

\begin{figure}[ht]
	\centering
	\includegraphics[width=.95\textwidth]{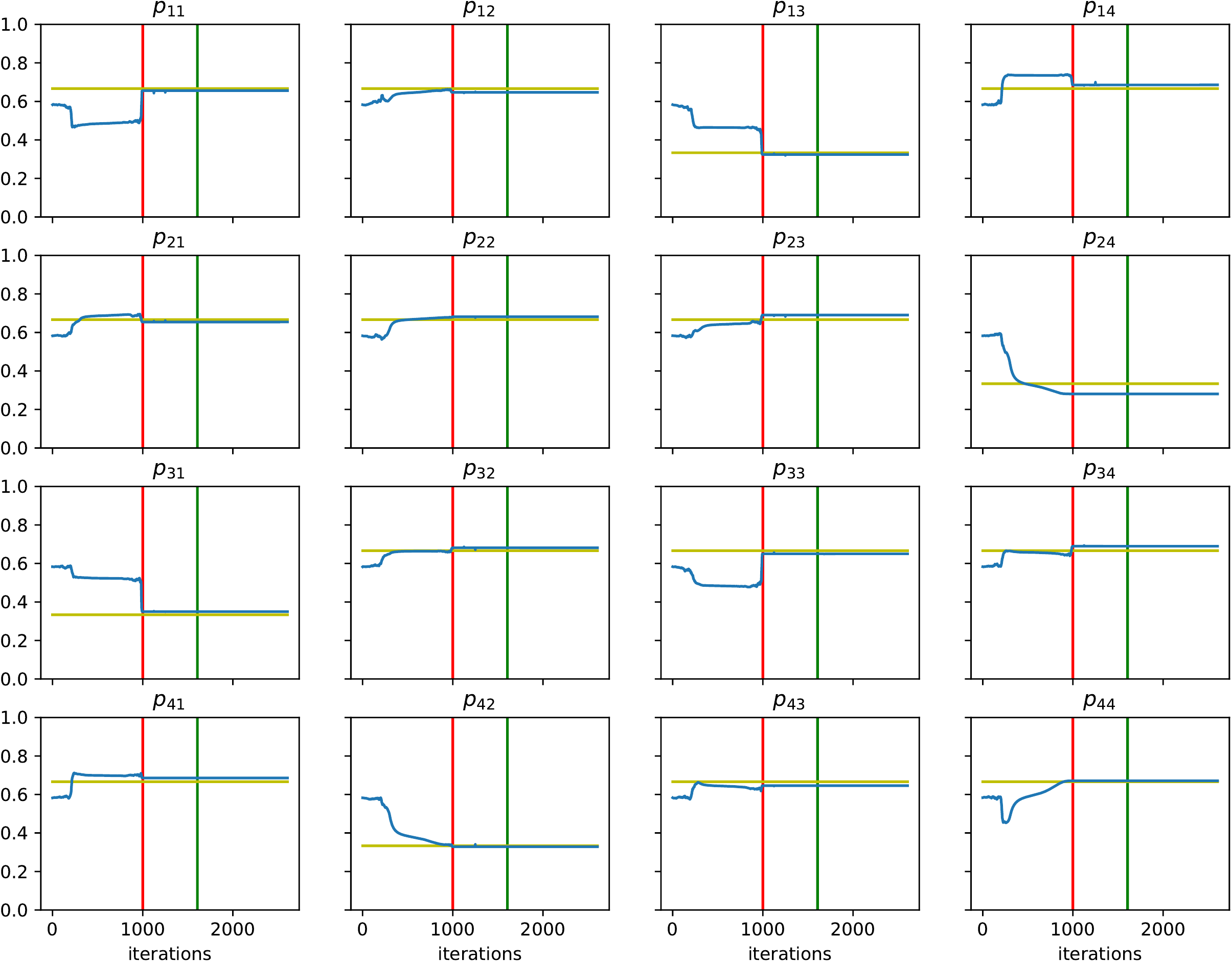}
	\caption{Evolution of the $p$ estimates across the iterations with $\nobs=100$ and $K=4$. Yellow line: simulated value. The red line is the end of the pre-heating, and the green line is the end of the heating.}
	\label{fig:sbm_all_pi}
\end{figure}

\begin{table}[ht]
	\centering
	\begin{tabular}{c|c|cc|cc}
		&& \multicolumn{2}{c|}{$\nobs=100$} & \multicolumn{2}{c}{$\nobs=200$} \\
		Parameter & Simulated value & RMSE & Empirical coverage & RMSE & Empirical coverage \\
		\hline
		Global $\theta$ &         & $0.648$ & $0.936\pm0.011$& $0.415$ & $0.956\pm0.009$ \\
		$\alpha_1$      & $0.250$ & $0.044$ & $0.943\pm0.010$& $0.031$ & $0.940\pm0.010$ \\
		$\alpha_2$      & $0.250$ & $0.044$ & $0.939\pm0.010$& $0.031$ & $0.943\pm0.010$ \\
		$\alpha_3$      & $0.250$ & $0.044$ & $0.941\pm0.010$& $0.031$ & $0.931\pm0.011$ \\
		$\alpha_4$      & $0.250$ & $0.044$ & $0.945\pm0.010$& $0.031$ & $0.940\pm0.010$ \\
		$p_{1,1}$       & $0.667$ & $0.023$ & $0.940\pm0.010$& $0.012$ & $0.949\pm0.010$ \\
		$p_{1,2}$       & $0.667$ & $0.019$ & $0.947\pm0.010$& $0.010$ & $0.949\pm0.010$ \\
		$p_{1,3}$       & $0.333$ & $0.022$ & $0.948\pm0.010$& $0.012$ & $0.953\pm0.009$ \\
		$p_{1,4}$       & $0.667$ & $0.019$ & $0.947\pm0.010$& $0.010$ & $0.948\pm0.010$ \\
		$p_{2,1}$       & $0.667$ & $0.020$ & $0.944\pm0.010$& $0.010$ & $0.947\pm0.010$ \\
		$p_{2,2}$       & $0.667$ & $0.022$ & $0.945\pm0.010$& $0.012$ & $0.952\pm0.009$ \\
		$p_{2,3}$       & $0.667$ & $0.020$ & $0.940\pm0.010$& $0.010$ & $0.953\pm0.009$ \\
		$p_{2,4}$       & $0.333$ & $0.020$ & $0.942\pm0.010$& $0.011$ & $0.951\pm0.010$ \\
		$p_{3,1}$       & $0.333$ & $0.023$ & $0.943\pm0.010$& $0.013$ & $0.943\pm0.010$ \\
		$p_{3,2}$       & $0.667$ & $0.019$ & $0.950\pm0.010$& $0.010$ & $0.939\pm0.011$ \\
		$p_{3,3}$       & $0.667$ & $0.023$ & $0.941\pm0.010$& $0.013$ & $0.955\pm0.009$ \\
		$p_{3,4}$       & $0.667$ & $0.020$ & $0.943\pm0.010$& $0.009$ & $0.950\pm0.010$ \\
		$p_{4,1}$       & $0.667$ & $0.020$ & $0.939\pm0.010$& $0.009$ & $0.957\pm0.009$ \\
		$p_{4,2}$       & $0.333$ & $0.020$ & $0.949\pm0.010$& $0.011$ & $0.947\pm0.010$ \\
		$p_{4,3}$       & $0.667$ & $0.019$ & $0.955\pm0.009$& $0.010$ & $0.952\pm0.009$ \\
		$p_{4,4}$       & $0.667$ & $0.024$ & $0.946\pm0.010$& $0.011$ & $0.954\pm0.009$ \\
	\end{tabular}
	\caption{Detailed result for numerical experiments on SBM with $2000$ replications. The RMSE given for $\para$ is the empirical mean of ${\|\widehat\theta-\theta_0\|}^2$ and the coverage is the coverage of the confidence ellipsoid in $\mathbb R^{Q^2+Q-1}$ built at the nominal level of 0.95. For all original parameter, the RMSE is computed after transformation of $\widehat\theta$ in original parameter space, and the confidence interval built at the nominal level of 0.95 is obtained by applying delta method with the Fisher Information Matrix estimate.}
	\label{tab:emp_sbm_details}
\end{table}

\FloatBarrier

\end{document}